\newtheorem{theorem}{Theorem}[section]
\newtheorem{proposition}[theorem]{Proposition}
\newtheorem{lemma}[theorem]{Lemma}
\newtheorem{corollary}[theorem]{Corollary}
\theoremstyle{remark}
\newtheorem{example}[theorem]{Example}
\title{Generalized Imaginary Verma and Wakimoto modules}
\begin{document}
\author[M. G. Alves]{Marcela Guerrini}
\author[I. Kashuba]{Iryna Kashuba}
\author[O. A. H. Morales]{Oscar  Morales}
\author[A. S. Oliveira]{Andr\'e  de Oliveira}
\author[F. J. S. Santos]{Fernando Junior Santos}

\address{ Institute of Mathematics, University of S\~ao Paulo, Caixa Postal 66281 CEP 05314-970, S\~ao Paulo, Brazil} 
\email{marcela.alves@usp.br}
\email{kashuba@ime.usp.br}
\email{oscarhm@ime.usp.br}
\email{andre2.oliveira@usp.br}
\email{fjssmat@ime.usp.br}
\date{}


\maketitle

\begin{abstract}
We develop a
general technique of constructing new irreducible weight modules for any affine Kac-Moody algebra using the parabolic induction, in the case when the Levi factor of a parabolic subalgebra is infinite-dimensional and the central charge is nonzero. Our approach uniforms and generalizes all previously known results with  imposed restrictions on inducing modules.   We also define generalized Imaginary Wakimoto modules which provide an explicit realization for generic generalized Imaginary Verma modules.
\medskip

\noindent {\bf Keywords:} Affine Kac--Moody algebra, Wakimoto module, twisting functor, Imaginary Verma module
\medskip

 \noindent {\bf 2010 Mathematics Subject Classification:} 17B10, 17B67, 17B69

\end{abstract}

\thispagestyle{empty}

\tableofcontents

\section{Introduction}

Representation theory of Kac-Moody algebras has  a vast range of applications in many areas of mathematics and physics. Affine  Lie algebras is the most important family of infinite dimensional Kac-Moody algebras with important connections to quantum field theory and integrable systems.  That is why their representations attract  an increasing attention from both mathematicians and physicists. 

Parabolic induction plays a key role in the representation theory of Lie algebras. In particular, the induction functor is crucial for the study of weight modules for Affine Lie algebras. The induction functor is defined by a choice of a parabolic subalgebra which by definition contains a Borel subalgebra. 

  Borel subalgebras of Affine Lie algebras are essentially classified by the corresponding partitions or quase partitions of the root system  \cite{JK85},  \cite{Fut94, Fut97}, \cite{DFG09}, \cite{FK18}. Verma type modules induced from Borel subalgebras were
studied in \cite{Cox94}, \cite{Cox05}, \cite{FS93}, \cite{Fut92}, \cite{Fut96}, \cite{CF04}, \cite{CF06}, \cite{BBFK} among the others.  Parabolic subalgebras (different from Borel subalgebras) of Affine Kac-Moody algebras are divided into 
two types depending on the dimension of the Levi factor. We are interested in the case when the dimension of the Levi factor is infinite. Such parabolic subalgebras always contain certain infinite dimensional Heisenberg subalgebra.  Modules induced from such
  parabolic subalgebras are called \emph{ generalized Imaginary Verma modules}.
Previously, families of generalized Imaginary Verma modules were studied in  \cite{FK09},  \cite{FK14},   \cite{BBFK},  \cite{FK18}, \cite{FKS19}.
In all these cases  the structure of the  induced modules is well understood  when the central charge (the scalar action of the central element) is nonzero. In particular, it allowed to construct new families of irreducible modules  for Affine Lie algebras using the parabolic induction from certain irreducible modules over the Levi factor of a parabolic subalgebra.  
The largest 
  known family  of the representations of the Levi factor  for which the  corresponding induced module is irreducible was described in \cite{FK18}. In particular, if the Levi factor of a parabolic subalgebra equals the sum of an  infinite dimensional Heisenberg subalgebra and a Cartan subalgebra then the irreducibility is preserved for so-called admissible modules with nonzero central charge. Our first main result extends this statement to arbitrary weight 
  modules over Heisenberg subalgebras. 
 
 For an Affine Lie algebra $\widehat{\mathfrak g}$ with a fixed Cartan subalgebra $H$ consider 
  a parabolic subalgebra $\widehat{\mathfrak{p}}$. Then 
 $\widehat{\mathfrak{p}}$ can be decomposed as follows:
 $$\widehat{\mathfrak{p}}=\widehat{\mathfrak{l}}\oplus \widehat{\mathfrak{u}},$$ 
 where $\widehat{\mathfrak{l}}$ is the Levi factor and $\widehat{\mathfrak{u}}$ is the nilradical.  Assume that
 $\widehat{\mathfrak{l}}=G+H$, where $G$ is the Heisenberg subalgebra of $\widehat{\mathfrak{g}}$, that is the subalgebra generated by all imaginary root subspaces.  The Heisenberg subalgebra $G$ has a natural $\mathbb{Z}$-grading determined by imaginary roots, equivalently by the adjoint action of a certain element $d\in H$ (degree derivation).  Let $c\in H$ be the central element of $\widehat{\mathfrak{g}}$. If $V$ is $\widehat{\mathfrak{g}}$-module (respectively $G$-module) with a scalar action of $c$ then the corresponding scalar is the central charge of $V$.

 For any $\lambda\in H^*$ and a $\mathbb{Z}$-graded  $G$-module $V$ with central charge $\lambda(c)$ we obtain a weight $(G+H)$-module structure on $V$ with the action of $d$ respecting the grading. 
Defining $\widehat{\mathfrak{u}}\cdot V=0$ we get a $\widehat{\mathfrak{p}}$-module structure on $V$. Then 
the parabolic induction defines for each $\lambda\in H^*$ the functor  
  $ \mathbb{I}^{\lambda} $ from the category of $\mathbb{Z}$-graded  $G$-modules with central charge $\lambda(c)$ to the category of weight $\widehat{\mathfrak{g}}$-modules. We have
 
   \begin{theorem} \label{thm-1}  If $\lambda(c) \neq 0$ then  
  the  functor  $ \mathbb{I}^{\lambda} $ preserves the irreducibility. 
\end{theorem}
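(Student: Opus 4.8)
The plan is to exploit the triangular-type decomposition $\widehat{\mathfrak g}=\widehat{\mathfrak u}^{-}\oplus\widehat{\mathfrak l}\oplus\widehat{\mathfrak u}$, where $\widehat{\mathfrak u}^{-}$ is the opposite nilradical, so that by the PBW theorem $\mathbb{I}^{\lambda}(V)\cong U(\widehat{\mathfrak u}^{-})\otimes V$ as $H$-weight modules and $\mathbb{I}^{\lambda}(V)$ is generated over $\widehat{\mathfrak g}$ by $1\otimes V\cong V$. One first notes that $1\otimes V$ is already irreducible over the Levi factor $\widehat{\mathfrak l}=G+H$: since $c$ acts by the scalar $\lambda(c)$ and $d$ as the grading operator shifted by $\lambda(d)$, a $(G+H)$-submodule of $1\otimes V$ is $H$-stable, hence $d$-stable, hence $\mathbb{Z}$-graded, hence a $\mathbb{Z}$-graded $G$-submodule, so $0$ or $V$. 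Consequently it suffices to show that every nonzero $\widehat{\mathfrak g}$-submodule $W\subseteq\mathbb{I}^{\lambda}(V)$ meets $1\otimes V$: then $W\cap(1\otimes V)$ is a nonzero $\widehat{\mathfrak l}$-submodule of $1\otimes V$, so $W\supseteq 1\otimes V$ and hence $W\supseteq U(\widehat{\mathfrak g})(1\otimes V)=\mathbb{I}^{\lambda}(V)$. As $\mathbb{I}^{\lambda}$ clearly sends nonzero modules to nonzero modules, the theorem follows.

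To produce a vector of $W$ inside $1\otimes V$ I would use the height grading. Choose $h_{0}$ in the Cartan subalgebra $\mathfrak h$ of the underlying finite-dimensional $\mathfrak g$ with $\alpha(h_{0})=\operatorname{ht}(\alpha)$ for every finite root $\alpha$; since $\delta(h_{0})=0$, the $h_{0}$-eigenspace decomposition makes $\widehat{\mathfrak g}$ a $\mathbb{Z}$-graded Lie algebra with $\widehat{\mathfrak l}$ in degree $0$, $\widehat{\mathfrak u}$ in strictly positive degrees and $\widehat{\mathfrak u}^{-}$ in strictly negative degrees of bounded range. Hence $\mathbb{I}^{\lambda}(V)=\bigoplus_{k\le 0}\mathbb{I}^{\lambda}(V)_{k}$ with $\mathbb{I}^{\lambda}(V)_{0}=1\otimes V$, and, this grading being a coarsening of the $H$-weight grading, every submodule $W$ is graded for it. Pick a nonzero weight vector $w$ in the top nonzero component $W_{k_{0}}$; since $\widehat{\mathfrak u}$ strictly raises degree, $\widehat{\mathfrak u}\cdot w\subseteq W_{>k_{0}}=0$. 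So the problem reduces to:

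\emph{Key Claim.} If $\lambda(c)\ne 0$, then every $w\in\mathbb{I}^{\lambda}(V)$ with $\widehat{\mathfrak u}\cdot w=0$ lies in $1\otimes V$.

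The inclusion $1\otimes V\subseteq\{w:\widehat{\mathfrak u}\cdot w=0\}$ is just the definition of the $\widehat{\mathfrak p}$-action; the content is the reverse one, which I would prove by a ``peeling'' argument. Expand a weight vector $w$ with $\widehat{\mathfrak u}\cdot w=0$ in the PBW monomial basis of $U(\widehat{\mathfrak u}^{-})$ attached to the real root vectors spanning $\widehat{\mathfrak g}_{\alpha+n\delta}$ ($\alpha\in\Delta^{-}$, $n\in\mathbb{Z}$), and suppose, for contradiction, $w\notin 1\otimes V$. Two structural facts drive the induction. First, for a simple root $\alpha_{i}$ and any $\gamma\in\Delta^{-}$ one has $\alpha_{i}+\gamma\notin\Delta^{+}$ (a simple root is not a sum of two positive roots), so $[\,\widehat{\mathfrak g}_{\alpha_{i}+m\delta},\widehat{\mathfrak u}^{-}\,]\subseteq\widehat{\mathfrak u}^{-}+\widehat{\mathfrak l}$; hence acting by the simple-root part of $\widehat{\mathfrak u}$ never creates new $\widehat{\mathfrak u}$-factors, never increases the number of PBW factors, and decreases it exactly on terms where a bracket $[\,\widehat{\mathfrak g}_{\alpha_{i}+m\delta},\widehat{\mathfrak g}_{-\alpha_{i}+n\delta}\,]\subseteq\widehat{\mathfrak g}_{(m+n)\delta}\subseteq\widehat{\mathfrak l}$ is released. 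Second, since every non-simple positive root has a simple-root summand, applying a simple-root element to a PBW factor in a direction $\gamma\ne -\alpha_{i}$ replaces it, up to a nonzero scalar, by a factor in the direction $\alpha_{i}+\gamma\in\Delta^{-}$ of strictly smaller height, without changing the number of factors; so one may ``walk'' all factors into negative-simple-root directions and then peel them off one at a time with the matching $\widehat{\mathfrak g}_{\alpha_{i}+m\delta}$. The released operator $\widehat{\mathfrak g}_{(m+n)\delta}$ acts on $V$ through the $G$-module structure when $m+n\ne 0$, and for $m+n=0$ it acts by the scalar $\lambda(h_{\alpha_{i}})+m\,\ell_{i}\,\lambda(c)$ with $\ell_{i}\ne 0$, which is a nonconstant affine function of $m$ precisely because $\lambda(c)\ne 0$; one then organises the induction (on the number of PBW factors, and on how far those factors are from being simple) so that the leading term of $\widehat{\mathfrak u}\cdot w$ with respect to a suitable monomial order does not vanish, contradicting $\widehat{\mathfrak u}\cdot w=0$.

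The delicate point — and the place where the hypothesis $\lambda(c)\ne 0$ is genuinely used — is exactly this last non-vanishing: one must choose the monomial order and the index $m$ so that the Heisenberg contributions coming from the brackets that spill out of the Levi factor into $G$ cannot conspire to cancel the leading term, and it is the nonzero central charge that forces the diagonal ($\widehat{\mathfrak g}_{0}$-) contributions to act invertibly enough to exclude this. When $\lambda(c)=0$ the statement is false — already imaginary Verma modules of zero central charge are reducible — so this use of the hypothesis is unavoidable. I expect this bookkeeping to be the main technical work; the reduction to the Key Claim via the height grading and the passage back to full irreducibility are routine. (One could alternatively hope to deduce the Key Claim from a twisting-functor identity comparing $\mathbb{I}^{\lambda}(V)$ for different underlying positive systems and reducing to a base case, but the direct peeling argument seems most robust for an arbitrary $G$-module $V$.)
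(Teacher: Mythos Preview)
Your reduction to the Key Claim via the $h_{0}$-height grading is fine and essentially mirrors the paper's induction on $\mathsf{ht}_{\widehat{\mathfrak u}}$. The gap is in your proof of the Key Claim itself. Consider already the height-one situation $w=\sum_{i} x_{-\alpha+r_{i}\delta}\otimes v_{i}$ with $\alpha$ simple and the $v_{i}\in V$ in distinct $\delta$-degrees. Applying $x_{\alpha+m\delta}$ produces the element $\sum_{i} y_{(m+r_{i})\delta}\,v_{i}\in V$, where $y_{k\delta}\in\widehat{\mathfrak g}_{k\delta}\subset G$ for $k\ne 0$. Your condition $\widehat{\mathfrak u}\cdot w=0$ forces this sum to vanish \emph{for every} $m$, in particular for all $|m|$ large, where no scalar term appears. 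So the question is purely about the Heisenberg module $V$: can one have
\[
\sum_{i} y_{N-k_{i}}\,v_{i}=0=\sum_{i} y_{-N-k_{i}}\,v_{i}\qquad\text{for all large }N
\]
with the $v_{i}$ nonzero? The paper's entire technical core (Lemma~\ref{lem-heis}) is devoted to showing this cannot happen when the central charge is nonzero, and the proof is a somewhat intricate induction using the operators $\Omega_{N-r,N-s}=x_{N-r}x_{-N-s}-x_{N-s}x_{-N-r}$.

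Your proposed substitute --- a monomial order on $U(\widehat{\mathfrak u}^{-})$ together with the observation that the $m+n=0$ bracket acts by the nonconstant scalar $\lambda(h_{\alpha_{i}})+m\ell_{i}\lambda(c)$ --- does not address this. A monomial order controls the $U(\widehat{\mathfrak u}^{-})$-side, but after peeling, the ``leading'' contribution is a vector $y_{k\delta}v_{1}\in V$, and nothing prevents $y_{k\delta}$ from annihilating $v_{1}$ (or any finite collection of $v_{i}$) since $V$ is an \emph{arbitrary} $\mathbb{Z}$-graded $G$-module. The scalar case $m+r_{i}=0$ is a single equation per $i$, irrelevant for large $|m|$. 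The hypothesis $\lambda(c)\ne 0$ enters not through those scalars but through the Heisenberg commutation $[y_{N},y_{-N}]=Na$, which is what drives Lemma~\ref{lem-heis}. Without that lemma (or an equivalent statement about weight $G$-modules of nonzero level), your peeling argument cannot close even at height one.
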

 
 \medskip
 
 Theorem \ref{thm-1}  serves as a key step to show our main result which gives the irreducibility of generalized Imaginary Verma modules with nonzero central charge.  
Let $\widehat{\mathfrak p}=\widehat{\mathfrak l}\oplus \widehat{\mathfrak u}$ be a parabolic subalgebra of $\widehat{\mathfrak g}$ with an infinite dimensional Levi subalgebra $\widehat{\mathfrak{l}}$.  Then
$\widehat{\mathfrak{l}} = \widehat{\mathfrak{l}^0} + G(\widehat{\mathfrak l})^{\perp}$, where  $\widehat{\mathfrak{l}^0}$ is the Lie subalgebra generated by $H$ and by all real root subspaces of $\widehat{\mathfrak{l}}$ and $G(\widehat{\mathfrak l})^{\perp}$ is the orthogonal complement to the imaginary part of $\widehat{\mathfrak{l}^0}$  with respect to the Killing form.

 If $M$ is a weight (with respect to $H$) $\widehat{\mathfrak{l}^0}$-module with central charge $a$ and $S$ is a $G(\widehat{\mathfrak l})^{\perp}\oplus \mathbb C d$-module with the same central charge and diagonalizable action of $d$, then $M \otimes S$  is an $\widehat{\mathfrak{l}}$-module. Such $\widehat{\mathfrak{l}}$-modules are called \emph{tensor modules}. 
 Let $\mathcal T = \mathcal T_a(\widehat{\mathfrak{l}})$ be the category of tensor $\widehat{\mathfrak{l}}$-modules with central charge $a$. Any  $V\in\mathcal T_a(\widehat{\mathfrak{l}})$
 has a structure of a 
 $\widehat{\mathfrak{p}}$-module with trivial action of the radical $\widehat{\mathfrak{u}}$. Further induction from a  $\widehat{\mathfrak{p}}$-module  to a $\widehat{\mathfrak{g}}$-module defines 
  the  functor $\mathbb{I}_{a, \widehat{\mathfrak{p}}}^{\mathcal T}$ from the category $\mathcal T_a(\widehat{\mathfrak{l}})$ to the category of weight $\widehat{\mathfrak g}$-modules. Our second main result is the following theorem which 
 provides a general tool of constructing new irreducible modules for all Affine Kac-Moody algebras.
  
  \medskip
   
   \begin{theorem} \label{thm-2}  Let $a\in \mathbb{C}\setminus \{0\}$. Then  
  the  functor $\mathbb{I}_{a, \widehat{\mathfrak{p}}}^{\mathcal T}$ preserves the irreducibility.

\end{theorem}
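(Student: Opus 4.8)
The plan is to run the standard ``a nonzero submodule meets the top'' argument for parabolic induction, feeding the nonzero central charge into it exactly where the imaginary root directions are involved, which is precisely the content of Theorem~\ref{thm-1}. Fix an irreducible $V\in\mathcal T_a(\widehat{\mathfrak{l}})$, so $V=M\otimes S$ with $M$ a weight $\widehat{\mathfrak{l}^0}$-module and $S$ a $G(\widehat{\mathfrak l})^{\perp}\oplus\mathbb Cd$-module, both of central charge $a$. I would begin with two elementary reductions. First, since $G+H\subseteq\widehat{\mathfrak l}$, every imaginary root space of $\widehat{\mathfrak g}$ lies in $\widehat{\mathfrak l}$, so the nilradical $\widehat{\mathfrak u}$ and the opposite nilradical $\widehat{\mathfrak u}^{-}$ are spanned by \emph{real} root vectors only; by PBW one has $\mathbb I_{a,\widehat{\mathfrak p}}^{\mathcal T}(V)\cong U(\widehat{\mathfrak u}^{-})\otimes V$ as $U(\widehat{\mathfrak u}^{-})$-modules, and in particular $1\otimes V$ generates it over $U(\widehat{\mathfrak g})$. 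Second, $V$ irreducible forces $M$ irreducible over $\widehat{\mathfrak{l}^0}$ and $S$ irreducible over $G(\widehat{\mathfrak l})^{\perp}\oplus\mathbb Cd$: a proper submodule $M'\subsetneq M$ yields the proper $\widehat{\mathfrak l}$-submodule $M'\otimes S$, because $G(\widehat{\mathfrak l})^{\perp}$ acts only on the second tensor factor, and symmetrically for $S$. Since $1\otimes V$ is irreducible as a $\widehat{\mathfrak p}$-module (the radical $\widehat{\mathfrak u}$ acting by zero) and generates $\mathbb I_{a,\widehat{\mathfrak p}}^{\mathcal T}(V)$, it suffices to show that every nonzero submodule $N$ of $\mathbb I_{a,\widehat{\mathfrak p}}^{\mathcal T}(V)$ has $N\cap(1\otimes V)\neq 0$.

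Next I would grade everything by the $\widehat{\mathfrak p}$-grading $\widehat{\mathfrak g}=\bigoplus_{k\in\mathbb Z}\widehat{\mathfrak g}_{(k)}$, with $\widehat{\mathfrak g}_{(0)}=\widehat{\mathfrak l}$, $\widehat{\mathfrak u}=\bigoplus_{k>0}\widehat{\mathfrak g}_{(k)}$, $\widehat{\mathfrak u}^{-}=\bigoplus_{k<0}\widehat{\mathfrak g}_{(k)}$, realized by $\operatorname{ad} h$ for a suitable $h\in H$. It induces on $\mathbb I_{a,\widehat{\mathfrak p}}^{\mathcal T}(V)$ a grading bounded above whose top component is $1\otimes V$, and since $h\in H$ acts semisimply every submodule is graded. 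Choose a nonzero homogeneous $x\in N$ of maximal degree $k\le 0$. If $e\in\widehat{\mathfrak g}_{(j)}\subseteq\widehat{\mathfrak u}$ with $j>0$ then $e\cdot x\in N_{(k+j)}=0$ by maximality, so $\widehat{\mathfrak u}\cdot x=0$. Thus the theorem reduces to the following assertion: \emph{a homogeneous vector $x\in\mathbb I_{a,\widehat{\mathfrak p}}^{\mathcal T}(V)$ with $\widehat{\mathfrak u}\cdot x=0$ lies in $1\otimes V$} (for then $k=0$ and $N_{(0)}=N\cap(1\otimes V)\neq 0$, hence by irreducibility of $1\otimes V$ as a $\widehat{\mathfrak p}$-module $N\supseteq 1\otimes V$ and $N=\mathbb I_{a,\widehat{\mathfrak p}}^{\mathcal T}(V)$).

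For the assertion I would argue by contradiction. If $x\notin 1\otimes V$ then, being homogeneous, $x$ has degree $<0$ and, writing $x=\sum_i m_i\otimes v_i$ with the $m_i$ distinct ordered PBW monomials in real root vectors of $\widehat{\mathfrak u}^{-}$ and $v_i\neq 0$, all the $m_i$ are nontrivial. Fix a total order on monomials refining the length, and let $m_0=f_{-\beta_1}\cdots f_{-\beta_r}$ be the largest monomial occurring in $x$. Using $\widehat{\mathfrak u}\cdot V=0$, for $e_\gamma\in\widehat{\mathfrak u}$ one has $e_\gamma\cdot x=\sum_i[e_\gamma,m_i]\otimes v_i$, and commuting $e_\gamma$ through $m_0$ replaces one factor $f_{-\beta_j}$ by $[e_\gamma,f_{-\beta_j}]$. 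Choosing $\gamma=\beta_j+n\delta\in\Delta(\widehat{\mathfrak u})$ with $n\neq 0$ makes $[e_\gamma,f_{-\beta_j}]$ a \emph{nonzero imaginary} vector $z_n\in\widehat{\mathfrak g}_{n\delta}\subseteq G$; the cross-terms either strictly decrease the length, or land in $\widehat{\mathfrak u}$ and are killed when moved to the right, or land in the real part of $\widehat{\mathfrak l}$ and are controlled by the affine $\mathfrak{sl}_2$-theory of the $\beta_j$-string. Tracking the appropriate stratum, the net effect of $e_{\beta_j+n\delta}$ on $x$ is, up to sign and strictly smaller terms, $(m_0\,\text{with}\,f_{-\beta_j}\,\text{removed})\otimes z_n v_0$. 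Iterating over $j=r,r-1,\dots,1$ peels the factors of $m_0$ off one by one; since $\widehat{\mathfrak u}\cdot x=0$ the peeled vectors must vanish at each stage, which forces $z_n v=0$ for suitable $n\neq 0$ and a nonzero $v$. But these are precisely the imaginary root vectors of nonzero central charge $a$ acting on $V$, and here the non-annihilation that underlies the proof of Theorem~\ref{thm-1} applies: along the relevant strings $G$ acts on $V$ without kernel, so no such vanishing is possible. This contradiction gives $x\in 1\otimes V$.

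The main obstacle is the leading-term bookkeeping in the last step: one must choose the total order on PBW monomials so that the relevant stratum of $e_{\beta_j+n\delta}\cdot x$ is genuinely isolated and does not cancel, and one must identify the composite operator obtained by interleaving the imaginary moves $z_n$ with the real-root manipulations and show it acts injectively on the relevant subquotients of $V$. This is exactly where $a\neq 0$ enters, and it is also where Theorem~\ref{thm-1} has to be invoked in a strengthened form: $V$ restricted to $G+H$ is far from irreducible (it is $M\otimes S$ with $M$ merely a weight $\widehat{\mathfrak{l}^0}$-module), so what one extracts from Theorem~\ref{thm-1} is not its statement but the non-annihilation property of imaginary root vectors on a tensor module with nonzero central charge. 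The remaining points---existence of the $\widehat{\mathfrak p}$-grading for the possibly non-standard parabolic $\widehat{\mathfrak p}$, together with the generation and tensor-factor irreducibility statements used above---are routine.
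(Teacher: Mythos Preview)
Your overall strategy coincides with the paper's: both reduce to showing that no nonzero vector outside $1\otimes V$ is annihilated by $\widehat{\mathfrak u}$, and both do this by applying real root vectors $e_{\beta+N\delta}$ with $|N|\gg 0$ so as to strip away the $U(\widehat{\overline{\mathfrak u}})$-part and force nonvanishing via the nonzero central charge. Your $\widehat{\mathfrak p}$-grading is exactly the paper's $\widehat{\mathfrak u}$-height, and your claim ``$\widehat{\mathfrak u}\cdot x=0\Rightarrow x\in 1\otimes V$'' is the contrapositive of the paper's height induction.

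Where your sketch is too loose is precisely the place you flag as the main obstacle, and the oversimplification there is a genuine gap. Your conclusion ``forces $z_n v=0$ for suitable $n\neq 0$ and a nonzero $v$'' is not what one gets: applying $e_{\beta+N\delta}$ to $\sum_i d_i(v_i\otimes w_i)$ hits \emph{all} the $d_i$ simultaneously, so already in the height-$1$ case $d_i=x_{-\beta+\alpha_i+r_i\delta}$ one obtains a relation $\sum_i y_N^i(v_i\otimes w_i)=0$ with $y_N^i\in\widehat{\mathfrak l}$, not a single $z_n v=0$. The paper closes this in two steps that your sketch does not contain: (i) it splits each $y_N^i=D_N^i+Z_N^i$ with $D_N^i\in\widehat{\mathfrak l}^0$ and $Z_N^i\in G(\widehat{\mathfrak l})^\perp$ and uses the $\mathbb Z$-grading of $S$ to isolate the equation $\sum_i v_i\otimes Z_N^i w_i=0$; (ii) it then invokes a dedicated Heisenberg lemma (Lemma~\ref{lem-heis}) asserting that for weight vectors $w_1,\dots,w_m$ in a weight $(G+H)$-module with $a\neq 0$, at least one of $\sum_i x_{N-k_i}w_i$ and $\sum_i x_{-N-k_i}w_i$ is nonzero for $N\gg 0$. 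This lemma is the ``strengthened non-annihilation'' you allude to, but it is substantially sharper than ``$G$ acts on $V$ without kernel''---individual imaginary root vectors certainly can annihilate vectors of $V$; what fails is the simultaneous vanishing of such \emph{sums} for all large $N$ of both signs. Once you supply the $D/Z$ tensor separation and Lemma~\ref{lem-heis}, your outline becomes the paper's proof; without them the leading-term bookkeeping does not close.
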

  
  \medskip
  
  Free field realizations of induced modules from a parabolic subalgebra with infinite dimensional  Levi factor $\widehat{\mathfrak l}$ were constructed in \cite{FGM15},  \cite{Mar11}, \cite{Mar13}, \cite{FKS19}. Based on the ideas of \cite{FKS19} and \cite{Futorny-Krizka2021} we define the \emph{generalized Imaginary Wakimoto modules} which realize explicitly generic generalized Imaginary Verma modules. We also define
  the \emph{Imaginary Wakimoto functor} from the category of $\widehat{\mathfrak l}$-modules to the category of $\widehat{\mathfrak g}$-modules which sends an $\widehat{\mathfrak l}$-module to the corresponding generalized Imaginary Wakimoto module.


\section{Quase partitions and parabolic subalgebras}

Let $\widehat{\mathfrak g}$ be an  affine Kac-Moody algebra  
with a Cartan subalgebra $H$ and $1$-dimensional center
$Z=\mathbb{C}c$. We have the  root
decomposition
$$
\widehat{\mathfrak g} = H \oplus (\oplus_{\alpha \in \Delta}
\widehat{\mathfrak g}_\alpha),
$$
where $\Delta$ is  the root
system of $\widehat{\mathfrak g}$ and $\widehat{\mathfrak g}_\alpha = \{ x \in \widehat{\mathfrak g} \, | \, [h, x] = \alpha(h) x
{\text{ for every }} h \in H\}$. 
Let $\pi$ be a fixed basis of the root system $\Delta$.
Then the root system $\Delta$   has a standard partition into positive
and negative roots with respect to $\pi$, $\Delta_+ (\pi)=\Delta_+$ and $\Delta_-(\pi)=\Delta_-$ respectively. 
Let $\delta\in \Delta_+$ be the
indivisible imaginary root.
  Then the set of all imaginary roots is $\Delta^{\rm im}=\{k\delta |
k\in\mathbb{Z}\setminus\{0\}\}$ and the set of all real roots $\Delta^{\rm re}=\Delta\setminus\Delta^{\rm im}$.
Denote by $G$ the Heisenberg subalgebra of $\widehat{\mathfrak g}$ generated by all imaginary root subspaces of $\widehat{\mathfrak g}$. Hence
 $G= G_- \oplus \mathbb{C}c \oplus G_+$, where
$G_{\pm} = \oplus_{k>0} \widehat{\mathfrak{g}}_{\pm k \delta}$. 
 We will denote by $\mathfrak g$ the underlined finite-dimensional simple Lie algebra and by 
 $d$ the element of $H$ such that $\delta(d)=1$ and $\alpha(d)=0$ for any root $\alpha$ of $\mathfrak g$. 

\medskip

\subsection{Quase partitions}

Recall that 
a subset $P\subset \Delta$ is called
 a \emph{partition} of $\Delta$ if $P$ is closed with respect to the sum of roots,  $P\cap (-P)=\emptyset$ and $P\cup
(-P)=\Delta$. 

 Following \cite{FK18} we will say that a subset $P\subset \Delta$ is
 a \emph{quase partition} of $\Delta$ if the following conditions are satisfied:
\begin{itemize}
\item
 $P\cap (-P)=\emptyset$ and $P\cup
(-P)=\Delta$.
\item Let $\widehat{\mathfrak{b}}_P$ be  a Lie  subalgebra of $\widehat{\mathfrak g}$
generated by $H$ and by the root spaces
$\widehat{\mathfrak g}_{\alpha}$ with $\alpha \in P$.  Then for any $\widehat{ \mathfrak{g} }_{\alpha}\subset\widehat{ \mathfrak{b}}_{ P }$ follows $\alpha \in P$.

\end{itemize}

In particular, for any quase partition $P$, the subset of real roots of $P$ is closed with respect to the sum of roots and hence $P\cap \Delta^{\rm re}$ is a partition of real roots. Indeed,  $(P\cap \Delta^{\rm re}) \cap (-P\cap \Delta^{\rm re}) = \emptyset$ and $(P\cap \Delta^{\rm re}) \cup (-P\cap \Delta^{\rm re}) = \Delta^{\rm re}$. Moreover, the second condition  of a quase partition implies that $P\cap \Delta^{\rm re}$ is closed with respect to the addition of roots. On the contrary, the subset of imaginary roots of  a quase partition $P$ is not necessarily a partition of $\Delta^{\rm im}$.  

We will call a subalgebra $\widehat{\mathfrak{b}}_P$ the \emph{Borel subalgebra} of $\widehat{\mathfrak g}$ corresponding to the quase partition $P$ (cf. \cite{DFG09}, \cite{FK18}).
By the results of \cite{JK85} and \cite{Fut94} there are finitely many conjugacy classes of $P\cap \Delta^{\rm re}$ by the Weyl group. On the other hand the number of conjugacy classes of quase partitions is infinite \cite{BBFK}.

\begin{example}

\begin{enumerate}
 \item The partition $P=\Delta_+$ corresponds to the standard Borel subalgebra.\vspace{0.07cm}
 \item  The \emph{natural partition} 
$$\Delta_{+,\textrm{nat}} = \{ \alpha + n\delta \in\Delta \ | \ \alpha \in \dot{\Delta}_{+}, n \in \mathbb{Z}\} \ \mathaccent\cdot\cup \ \{n\delta \ | \ n \in \mathbb{N}\}$$ corresponds to the natural Borel subalgebra 
 $$\widehat{\mathfrak{b}}_{\textrm{nat}} = H \oplus \left(\oplus_{\alpha \in \Delta_{+,\textrm{nat}}} \widehat{\mathfrak{g}}_{\alpha}\right),$$
 where $\dot{\Delta}$ is the root system of $\mathfrak{g}$ and $\dot{\Delta}_+=\dot{\Delta}\cap \Delta_+$.
 \item Each function 
 $\phi: \mathbb{N} \rightarrow \{\pm \}$ defines a  Borel subalgebra $\widehat{\mathfrak{b}}_{\textrm{nat}}^{\phi}$ corresponding to the quase partition
 $$
\{\alpha+k\delta\in\Delta\ |\ \alpha \in \dot{\Delta}_+, k\in \mathbb{Z}\}
\mathaccent\cdot\cup \{n \delta\ |\  n \in \mathbb{N}, \phi(n)=+\}\mathaccent\cdot\cup \{-m \delta\
|\ m \in \mathbb{N}, \phi(m)=-\}
.$$
\end{enumerate}
\end{example}

The subalgebras $\widehat{\mathfrak{b}}_P$ do not exhaust all non-conjugated Borel subalgebras of $\widehat{\mathfrak g}$. To get the full classification one needs to consider all possible triangular decompositions of the Heisenberg subalgebra 
$G$. 
 Using the Killing form, in each subspace $\widehat{\mathfrak{g}}_{k\delta}$ we can choose a commuting basis $x_{k}^1, \ldots, x_{k}^{m_k}$ such that 
  $[x_{k}^i, x_{n}^j]=k\delta_{ij}\delta_{k,-n}c$ for all possible $k, n, i, j$. A triangular decomposition $G=G_-\oplus \mathbb{C} c\oplus G_{+}$, where $G_{\pm}=\sum_{\pm k\in \mathbb{Z}_{>0}}\widehat{\mathfrak{g}}_{k\delta}$, corresponds to a partition of imaginary roots. On the other hand,
  each function 
 $\phi: \mathbb{N} \rightarrow \{\pm \}$ defines a quase partition of imaginary roots and the triangular decomposition $G=G_-^{\phi}\oplus \mathbb{C} c\oplus G_{+}^{\phi}$, where 
 $$G_{\pm}^{\phi}=\sum_{\phi(k)=+}\widehat{\mathfrak{g}}_{\pm k\delta}\oplus \sum_{\phi(k)=-}\widehat{\mathfrak{g}}_{\mp k\delta}.$$
 
 We can obtain different triangular decompositions of $G$
    by dividing  $x_{k}^i$ arbitrarily between $G_+$ and $G_-$ for each integer $k>0$ and  each $i=1, \ldots, m_k$ (if $x_{k}^i\in G_{\pm}$ then $x_{-k}^i\in G_{\mp}$). Such triangular decompositions of $G$ are more general than those corresponding to  quase partitions. They lead to new families of Borel subalgebras of  $\widehat{\mathfrak g}$ (cf. \cite{FK18}).

  For simplicity from now on we will consider the Borel subalgebras $\widehat{\mathfrak{b}}_P$ corresponding to quase partitions $P$ of the root system,  though everything can be easily extended to an arbitrary Borel subalgebra of $\widehat{\mathfrak g}$.

\subsection{Parabolic subalgebras}

 A subalgebra of $\widehat{\mathfrak g}$ is called a {\em parabolic subalgebra} if it contains properly a Borel subalgebra.   There are two types of parabolic subalgebras: those containing the standard Borel subalgebra and those  containing the real part of the natural Borel subalgebra. 
  These parabolic subalgebras are called {\em type I} and {\em type II}  respectively (cf. \cite{Fut97}).  Examples of 
parabolic subalgebras of $\widehat{\mathfrak{g}}$ can be constructed from  parabolic subsets $P \subset \Delta$, which are  closed subsets of $\Delta$ such that $P \cup -P = \Delta$. Given such a parabolic subset $P$, the corresponding parabolic subalgebra  of $\widehat{\mathfrak{g}}$ is generated by $H$ and by all  root subspaces $\widehat{\mathfrak{g}}_{\alpha}$ with $\alpha \in P$.
  A classification of parabolic subsets of $\Delta$  was obtained in \cite{Fut92}, \cite{Fut97}.

 Every parabolic subalgebra $\widehat{\mathfrak{p}}$ of $\widehat{\mathfrak{g}}$ containing a Borel subalgebra $\widehat{\mathfrak{b}}$ has a Levi decomposition $\widehat{\mathfrak{p}} = \widehat{\mathfrak{l}} \oplus \widehat{\mathfrak{u}}$, with the Levi factor $\widehat{\mathfrak{l}}$ and $\widehat{\mathfrak{u}} \subset \widehat{\mathfrak{b}}$. If $\widehat{\mathfrak{p}}$ is of \textit{type II} then $\widehat{\mathfrak{l}}$ is infinite-dimensional.
 In this case the subalgebra $\widehat{\mathfrak{l}}$ contains a sum of certain affine Lie subalgebras and a subalgebra of the Heisenberg algebra $G$. Further suppose $\widehat{\mathfrak p}$ is such parabolic subalgebra of $\widehat{\mathfrak g}$ which is not a Borel subalgebra. Then its real part can be described as follows  \cite{Fut97}, \cite{FK09}.  Assume for simplicity  that a parabolic subalgebra $\widehat{\mathfrak p}$ of $\widehat{\mathfrak g}$ contains the whole natural Borel subalgebra (not just its real part). Then such parabolic subalgebra $\widehat{\mathfrak p}$ has
 the Levi decomposition 
  $\widehat{\mathfrak p} =\widehat{\mathfrak l}\oplus  \widehat{\mathfrak u}$,
  where $\widehat{\mathfrak l}$ is the infinite dimensional Levi factor.  Denote by $\widehat{\mathfrak l}^0$ the Lie subalgebra of $\widehat{\mathfrak l}$ generated by all its real root subspaces and by $H$.
   Set $G({\widehat{\mathfrak l}})$ to be a subalgebra of $\widehat{\mathfrak l}^0$ spanned by its imaginary root subspaces. Let $G(\widehat{\mathfrak l})^{\perp}\subset G$
 be the orthogonal complement of  $G(\widehat{\mathfrak l})$ in $G$ with respect to the Killing form, that is 
$G=G(\widehat{\mathfrak l})+G(\widehat{\mathfrak l})^{\perp}$, $[G(\widehat{\mathfrak l})^{\perp}, \widehat{\mathfrak l}^0]=0$ and  $\widehat{\mathfrak l}^0\cap G(\widehat{\mathfrak l})^{\perp}=\mathbb{C} c$.   
The Levi subalgebra $\widehat{\mathfrak l}$  may contain the whole $G(\widehat{\mathfrak l})^{\perp}$ or a part of it. In the former case  $\widehat{\mathfrak l}$ contains $G$ and $\widehat{\mathfrak l} =\widehat{\mathfrak l}^0+ G(\widehat{\mathfrak l})^{\perp}$. 

  Let $n$ be the rank of the underlined simple finite dimensional Lie algebra $\mathfrak g$ and\\
   $\{\alpha_1, \ldots, \alpha_n\}\subset \pi$  the set of simple roots of  
$\mathfrak g$.  Set   $I = \{1, \dots, n\}$ and consider a subset $\omega\subset I$.  
Denote by $\dot \Delta^{\omega}$  the root system generated by the 
roots  $\alpha_i, i\in \omega$ and set
$$
\Delta^{\omega} = \{ \alpha + n\delta \in \Delta\ |\ \alpha \in \dot \Delta^\omega, n \in \mathbb{Z}\} \cup
\{n\delta \ |\ n \in \mathbb{Z} \setminus \{0\} \},
$$
$
\Delta^{\omega, \rm re} = \Delta^{\omega} \cap \Delta^{\rm re}.
$
Then the real roots of $\widehat{\mathfrak l}$ are conjugated to $\Delta^{\omega, \rm re}$ for some choice of $\omega$ (cf. \cite{Fut94}).

\medskip

\section{Generalized Imaginary Verma modules}

\subsection{Imaginary induction functor}
Let $a \in \mathbb{C}$ and $V$ be a $\widehat{\mathfrak{p}}$-module with central charge $a$ and with $\widehat{\mathfrak{u}} \cdot V = 0$. Define the \emph{generalized Verma type module} $$M_{a,\widehat{\mathfrak p}}(V)= \textrm{Ind}_{\widehat{\mathfrak{p}}}^{\widehat{\mathfrak{g}}}(V)=U(\widehat{\mathfrak g})\otimes_{U(\widehat{\mathfrak p})}V.$$

 When $\widehat{\mathfrak p}=\widehat{\mathfrak b}$ is a Borel subalgebra and $V\simeq \mathbb{C}$ is a one-dimensional $\widehat{\mathfrak b}$-module such that $hv=\lambda(h)v$ for all $v\in V$ and $h\in H$, 
 we get a \emph{Verma type module} $M_{\widehat{\mathfrak b}}(\lambda)=U(\widehat{\mathfrak g})\otimes_{U(\widehat{\mathfrak b})}\mathbb{C}$ 
(\cite{Fut97}).
 
To emphasize  that we will consider the case of  a parabolic subalgebra $\widehat{\mathfrak p}$ of 
type II we call $M_{a,\widehat{\mathfrak{p}}}(V)$ \emph{generalized Imaginary Verma module} with central charge $a$. 
 We have the following well known particular cases of the construction above. 
  
  \begin{enumerate}
 \item     In the particular case when $\widehat{\mathfrak{l}}= G+H$ 
     and $V$ is a  $(G+H)$-module, the module $M_{a,\widehat{\mathfrak{p}}}(V)$ is a
      \emph{ generalized loop module} with central charge $a$.     
     \item Taking $V$ to be a Verma module for $G+H$ with highest weight $\lambda\in H^*$ with respect to the triangular decomposition $G=G_-\oplus \mathbb{C} c\oplus G_+$, the corresponding module 
      $M_{a,\widehat{\mathfrak{p}}}(V)$ is an \emph{imaginary Verma module} \cite{Fut94}.
        \item Finally, taking $V$ to be a Verma module for $G+H$ with highest weight $\lambda\in H^*$ with respect to the triangular decomposition $G=G_-^{\phi}\oplus \mathbb{C} c\oplus G_+^{\phi}$, the corresponding module 
      $M_{a,\widehat{\mathfrak{p}}}(V)$ is a \emph{ $\phi$-Imaginary Verma module} \cite{BBFK}.
    \end{enumerate}

 For any Lie algebra ${\mathfrak{a}}$ with a Cartan subalgebra $\mathfrak{h}$ denote by 
 $\mathcal{M}({\mathfrak{a}})$ the category of all weight (i.e. $\mathfrak{h}$-diagonalizable) ${\mathfrak{a}}$-modules,
 and by $\mathcal{M}(a,{\mathfrak{a}})$ its subcategory of all weight ${\mathfrak{a}}$-modules with central charge $a$. Then
  we have the  following \emph{Imaginary (induction) functor}
  
   \begin{align*}
  \mathbb{I}_{a, \widehat{\mathfrak{p}}} \colon \mathcal{M}(a, \widehat{\mathfrak{l}}) \rightarrow \mathcal{M}(a, \widehat{\mathfrak{g}}),
   \end{align*}
   which sends an $\widehat{\mathfrak{l}}$-module $V$ to the $\widehat{\mathfrak{g}}$-module   $M_{a,\widehat{\mathfrak{p}}}(V)$.

For any positive integer $k$, denote $G_k=\widehat{\mathfrak{g}}_{k\delta}\oplus \mathbb{C} c\oplus \widehat{\mathfrak{g}}_{-k\delta}$ . We say that a $G_k$-module $M$ is 
$U(\widehat{\mathfrak{g}}_{k\delta})$-surjective (respectively $U(\widehat{\mathfrak{g}}_{-k\delta})$-surjective) if for any two elements $v_1, v_2\in M$ there exist $v \in M$ and  $u_1, u_2\in U(\widehat{\mathfrak{g}}_{k\delta})$ (respectively, $u_1, u_2\in U(\widehat{\mathfrak{g}}_{-k\delta})$) such that  $v_i=u_i v$, $i=1,2$. 
A 
$G(\widehat{\mathfrak l})^{\perp}$-module $M$ is admissible if for any positive integer $k$,  any its cyclic $G_k\cap G(\widehat{\mathfrak l})^{\perp}$-submodule $M'\subset M$ is $U(\widehat{\mathfrak{g}}_{k\delta}\cap G(\widehat{\mathfrak l})^{\perp})$-surjective or $U(\widehat{\mathfrak{g}}_{-k\delta}\cap G(\widehat{\mathfrak l})^{\perp})$-surjective. Recall the following   irreducibility criterion for admissible tensor modules which follows from \cite[Theorem 1, Corollary 3]{FK18}.

\begin{theorem} \cite{FK18}\label{thm-FK}
Let $a\in \mathbb{C}$,
$\widehat{\mathfrak{l}} = \widehat{\mathfrak{l}^0} + G(\widehat{\mathfrak l})^{\perp}$ and $V \simeq M \otimes S$, 
 where $M$ is an irreducible weight $\widehat{\mathfrak{l}^0}$-module and $S$ is an admissible irreducible $\mathbb Z$-graded $G(\widehat{\mathfrak l})^{\perp}$-module. Then  $\mathbb{I}_{a, \widehat{\mathfrak{p}}}(V) = \mathbb{I}_{a, \widehat{\mathfrak{p}}}^{\mathcal{T}}(V)$ is irreducible $\widehat{\mathfrak{g}}$-module if $a \neq 0$.
\end{theorem}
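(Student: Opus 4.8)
The plan is to combine Poincar\'e--Birkhoff--Witt with a suitable grading to reduce the irreducibility of $\widetilde M:=\mathbb{I}_{a,\widehat{\mathfrak p}}(V)$ to a statement about one graded component. Fix the triangular decomposition $\widehat{\mathfrak g}=\widehat{\mathfrak u}^{-}\oplus\widehat{\mathfrak l}\oplus\widehat{\mathfrak u}$; since the hypothesis $\widehat{\mathfrak l}=\widehat{\mathfrak{l}^0}+G(\widehat{\mathfrak l})^{\perp}$ forces $\widehat{\mathfrak l}$ to contain the whole Heisenberg subalgebra $G$, all imaginary root spaces lie in $\widehat{\mathfrak l}$, so $\widehat{\mathfrak u}$ and $\widehat{\mathfrak u}^{-}$ are spanned purely by real root spaces and $\widetilde M\cong U(\widehat{\mathfrak u}^{-})\otimes_{\mathbb C}V$ as a vector space. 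Next I would grade $\widehat{\mathfrak g}$ by the tuple recording the multiplicities of the simple roots $\alpha_j$, $j\in I\setminus\omega$: this tuple is a $\mathbb Z$-linear functional on the root lattice, hence is realised by commuting elements of $H$, so it induces an $H$-compatible $\mathbb Z^{I\setminus\omega}$-grading on $\widetilde M$ in which $\widehat{\mathfrak l}$ has degree $0$, $\widehat{\mathfrak u}$ strictly raises and $\widehat{\mathfrak u}^{-}$ strictly lowers the degree, and, after a shift, $\widetilde M=\bigoplus_{\mu\le 0}\widetilde M_{\mu}$ with $\widetilde M_{0}=1\otimes V\cong V$ as $\widehat{\mathfrak p}$-modules (with $\widehat{\mathfrak u}\cdot V=0$). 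Every submodule $N\subseteq\widetilde M$ is a weight module, hence graded, and a standard tensor-product argument shows $V=M\otimes S$ is irreducible over $\widehat{\mathfrak l}$ (as $M$, $S$ are irreducible over the commuting subalgebras $\widehat{\mathfrak{l}^0}$, $G(\widehat{\mathfrak l})^{\perp}$). Thus it suffices to show $N\cap\widetilde M_{0}\ne 0$ whenever $N\ne 0$: then $N\cap V$ is a nonzero $\widehat{\mathfrak p}$-submodule of the irreducible $V$, so $N\supseteq V$ and hence $N\supseteq U(\widehat{\mathfrak g})V=\widetilde M$.

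To prove $N\cap\widetilde M_{0}\ne 0$ I would argue by contradiction, choosing among the nonzero homogeneous elements of $N$ of maximal degree $\mu$ one whose largest PBW-monomial length $\ell$ (in the negative real root vectors) is smallest possible; if $\ell=0$ then $v\in 1\otimes V$ and $\mu=0$, and we are done. If $\ell\ge 1$, write the top-length part of $v$ as $\sum_i m_i\otimes w_i$ with ordered length-$\ell$ monomials $m_i$ in root vectors $f_{-\beta}$, $\beta\in\widehat{\mathfrak u}$, and apply $e_\gamma\in\widehat{\mathfrak u}$ for $\gamma$ the largest root (in a fixed order) occurring among these $\beta$'s. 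Applying $e_\gamma$ raises the degree, so maximality of $\mu$ gives $e_\gamma v=0$; on the other hand, using $e_\gamma\cdot V=0$, every surviving contribution to $e_\gamma(m_i\otimes w_i)$ comes from one commutator $[e_\gamma,f_{-\beta}]$, landing in $H$, in a real root space of $\widehat{\mathfrak{l}^0}$, in $\widehat{\mathfrak u}^{-}$, or — via $[\widehat{\mathfrak g}_{\alpha_j+m\delta},\widehat{\mathfrak g}_{-\alpha_j+n\delta}]\subseteq\widehat{\mathfrak g}_{(m+n)\delta}$ with $m+n\ne 0$ — in an imaginary root space $\widehat{\mathfrak g}_{k\delta}\subseteq G\subseteq\widehat{\mathfrak l}$. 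Commuting these correction terms to the right past the remaining $f_{-\beta}$'s strictly lowers the monomial length, so $e_\gamma v$ is a degree-$(\mu+\gamma)$ element of $N$ whose top-length part has length $\le\ell-1$ and is a sum of monomials tensored with elements $x\cdot w_i$, where $x$ runs over the Cartan, real- and imaginary-root elements of $\widehat{\mathfrak l}$ produced by the commutators; setting this to zero should, after choosing $v$ and $\gamma$ so that distinct $m_i$ yield distinct leading monomials, force each such $x$ to annihilate the relevant $w_i$.

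Ruling that out is the main obstacle, and it is exactly where $a\ne 0$ and the admissibility of $S$ enter: the imaginary corrections lying in $G(\widehat{\mathfrak l})^{\perp}$ act on the factor $S$, and admissibility together with $a\ne 0$ guarantees that every cyclic $G_k\cap G(\widehat{\mathfrak l})^{\perp}$-submodule of $S$ is $U(\widehat{\mathfrak g}_{k\delta}\cap G(\widehat{\mathfrak l})^{\perp})$-surjective or $U(\widehat{\mathfrak g}_{-k\delta}\cap G(\widehat{\mathfrak l})^{\perp})$-surjective, so one can always ``solve back'' such an equation and is never trapped at $0$; the corrections in $G(\widehat{\mathfrak l})+H$ are handled the same way, using that $M$ is a weight module and that $[x_k,x_{-k}]=ka\,c$ acts by the nonzero scalar $ka$. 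Combined with the irreducibility of $M\otimes S$ over $\widehat{\mathfrak l}$, this contradicts $e_\gamma v=0$, so $\ell$ can be strictly decreased; iterating produces a nonzero element of $N\cap\widetilde M_{0}$. The genuine difficulty is structural rather than computational: the finite-type ``ladder'' shifting the $\mathbb Z^{I\setminus\omega}$-degree is coupled through the commutators to the infinite-dimensional Heisenberg action on $V$ in the imaginary directions, so the bookkeeping cannot be carried out on PBW monomials alone — one must simultaneously control the induced operators on $M\otimes S$, and for $a=0$ this very step collapses and the statement is false. Packaging it cleanly will most likely require choosing the minimal counterexample $v$ with respect to a carefully designed well-order on pairs (maximal monomial length, a secondary invariant of the $V$-part), so that the forced vanishing yields a nontrivial linear relation among weight vectors of $M\otimes S$ contradicting irreducibility.
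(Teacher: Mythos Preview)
This theorem is not proved in the paper; it is quoted from \cite{FK18} (see the sentence introducing it), so there is no in-paper proof to compare against directly. The paper's own contribution is the stronger Theorem~\ref{thm-main}, which drops the admissibility hypothesis on $S$, and it is instructive to compare your outline to that argument.

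Your overall architecture agrees with the paper's: reduce irreducibility to showing that any nonzero weight element $v$ can be pushed back into $1\otimes V$ by elements of $\widehat{\mathfrak u}$, and do this by induction on a height function measuring the $\widehat{\mathfrak u}^-$-depth of $v$. The substantive difference is in \emph{which} raising operators are applied. You fix a single root $\gamma$, argue that maximality forces $e_\gamma v=0$, and then try to derive a contradiction from admissibility. The paper instead exploits the infinite family $x_{\beta+N\delta}$, $N\in\mathbb Z$, for a fixed simple real root $\beta$: there are infinitely many raising operators of the same $\widehat{\mathfrak u}$-height but varying imaginary shift, and the key Lemma~\ref{lem-heis} shows that for $|N|\gg 0$ the resulting element of $V$ cannot vanish (this is exactly where $a\ne 0$ enters, via $[x_N,x_{-N}]=Nac$). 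Admissibility in \cite{FK18} plays the role that Lemma~\ref{lem-heis} plays here.

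Your sketch has a real gap at the step ``maximality of $\mu$ gives $e_\gamma v=0$'': the grading you set up is by $\mathbb Z^{I\setminus\omega}$, which is only partially ordered, so ``maximal degree'' is not well-defined without further work; and even granting a total order, your invocation of admissibility (``one can always `solve back' such an equation and is never trapped at $0$'') is too vague to supply the needed contradiction. Surjectivity of $U(\widehat{\mathfrak g}_{\pm k\delta}\cap G(\widehat{\mathfrak l})^\perp)$ on cyclic submodules does not by itself prevent a \emph{fixed} imaginary element from annihilating a given $w_i\in S$; what you actually need is to produce a single raising operator with $e_\gamma v\ne 0$ and strictly smaller height. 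The paper's resolution --- varying the imaginary shift $N$ rather than fixing $\gamma$ --- converts this into a concrete Heisenberg-algebra non-vanishing statement, which is then proved directly.
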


\begin{example}
Let $a\in \mathbb{C}$, $\widehat{\mathfrak{p}}$ is a parabolic subalgebra of $\widehat{\mathfrak{g}}$ with the Levi factor
$\widehat{\mathfrak{l}}=G+H$. Let $L(a)$ be the highest weight $G$-module with respect to the decomposition $G=G_-\oplus \mathbb C c\oplus G_+$ and with central charge $a$ (irreducible if $a\neq 0$). Define an $\widehat{\mathfrak{l}}$-module structure on $L(a)$ by fixing the scalar action of $H\cap \mathfrak{g}$ and defining the weight structure on $L(a)$ with respect to the derivation $d$.  Then $\mathbb{I}_{a, \widehat{\mathfrak{p}}}(L(a))$ is irreducible $\widehat{\mathfrak{g}}$-module if $a \neq 0$. It is isomorphic to an imaginary Verma module \cite{Fut97}. 

 Let $L(a)^-$ be the lowest weight $G$-module  with respect to the same triangular decomposition and with central charge $a$ (irreducible if $a\neq 0$), and $V=L(a)\otimes L(a)^-$. Then $V$ is neither diagonal (in the sense of \cite{BBFK})  nor admissible. 
\end{example}

We will show that the condition of admissibility can be lifted in Theorem \ref{thm-FK}. 
In the following subsection we will prove it for an arbitrary generalized loop module.

\medskip

\subsection{Irreducibility of generalized loop modules}

Let $I=\{1, \ldots, n\}$ and $\alpha_1, \ldots, \alpha_n$ be simple roots of $\mathfrak{g}$.
For any
subset $\omega\subset I$, let $Q^{\omega}_{\pm}$ denote a
semigroup of $H^*$ generated by $\pm \alpha_i$, $i\in \omega$. Set   
$Q^{\omega} = \oplus_{j \in \omega} \mathbb{Z} \alpha_j
\oplus \mathbb{Z}\delta$  and if $\omega = I$, denote $Q^{I}$  by $Q$. We can see that $U(\widehat{\mathfrak{g}})$ is a $Q$-graded algebra, then $$U(\widehat{\mathfrak{g}}) = \bigoplus\limits_{\phi \in Q} U(\widehat{\mathfrak{g}})_{\phi},$$ and the elements of $U(\widehat{\mathfrak{g}})_{\phi}$ are called homogeneous elements of $U(\widehat{\mathfrak{g}})$ of homogeneous degree $\phi \in Q$.

 Let $\alpha\in Q^{\omega}_{-}$
and $\alpha = -\sum_{j\in \omega} k_j \alpha_j$,
where each $k_j$ is in $\mathbb{Z}_{\geq 0}$. We define the $\omega$-height of $\alpha$ to be 
$\mathsf{ht}_{\omega}(\alpha) = \sum_{j=1}^n k_j$.  

For a parabolic subalgebra $\widehat{\mathfrak{p}}= \widehat{\mathfrak{l}} \oplus \widehat{\mathfrak{u}}$, the subalgebra
$\widehat{\overline{\mathfrak{u}}}$ satisfying 
$\widehat{\mathfrak{g}}= \widehat{\mathfrak{p}} \oplus \widehat{\overline{\mathfrak{u}}}$ is called {\it the opposite radical}
of $\widehat{\mathfrak{p}}$. Observe that for any $\widehat{\mathfrak{p}}$-module $V$ the induced module $M_{a, \widehat{\mathfrak{p}}}(V)\simeq U(\widehat{\overline{\mathfrak{u}}})\otimes_{\mathbb{C}}V$ as a vector space.

 Assume $\omega=\omega_{\widehat{\mathfrak{u}}}$ consists of all indices $i$ for which $\alpha_i$ is a simple root of $\,\widehat{\mathfrak{g}}\,$ with $\,\widehat{\mathfrak{g}}_{\alpha}\in\widehat{\overline{\mathfrak{u}}}.$ Let  $v\in M_{a, \widehat{\mathfrak{p}}}(V)$ be a
nonzero weight element ($v \in M_{a, \widehat{\mathfrak{p}}}(V)_{\mu}$, for $\mu \in H^{\ast}$).  
Then $$v=\sum_{i} u_iv_i,$$ for some finite set of indices,  
where  for all $i$, $u_i\in U(\widehat{\overline{\mathfrak{u}}})$ are linearly independent  elements and 
$v_i\in V$.  Since $v$ is a weight vector then each $u_i$ is a homogeneous element of $U(\widehat{\overline{\mathfrak{u}}})$ of  homogeneous degree  $\phi_i\in Q$.  Each $\phi_i$ can be written as 
$\phi_i=\psi_i+\tau_i$, where  $\psi_i\in Q_-^{\omega_{\widehat{\mathfrak{u}}}}$ and $\tau_i\in Q^{I\setminus \omega_{\widehat{\mathfrak{u}}}}$.
 Moreover, due to homogeneity  we have $\psi_i=\psi_j$
for all $i,j$. 
Denote by $\psi$ this common restriction to  $Q_-^{\omega_{\widehat{\mathfrak{u}}}}$.

  The {\em $\widehat{\mathfrak{u}}$-height} of $v$  will be defined as the $\omega$-height of $\psi$   and will be denoted by 
  $\mathsf {ht}_{\widehat{\mathfrak{u}}}(v)$.

We assume now that $\omega=\emptyset$ and 
 $\widehat{\mathfrak{l}}=G+H$.  
 We will say that
 $V$ is a weight $(G+H)$-module if $V$ is a $\mathbb{Z}$-graded $G$-module with diagonalizable action of 
$H$, where the action of $d$ is compatible with the $\mathbb{Z}$-grading. An irreducible weight $(G+H)$-module is determined by an irreducible $\mathbb{Z}$-graded $G$-module $V$ and a weight $\lambda\in H^*$ with the action of
$h\in \mathfrak h=H\cap \mathfrak g$ and $c$ on $V$ by  $\lambda(h)$ and  $\lambda(c)$ respectively, while the action of $d$ is given  by $\lambda(d)$ on a chosen graded component of $V$.

If  $\lambda(c)=a$, 
we will denote the corresponding generalized Imaginary Verma module
 $M_{a, \widehat{\mathfrak{p}}}(V)$ by $M_{a}(V)$. For each $\lambda\in H^*$ we have a functor 
  $ \mathbb{I}^{\lambda} $ from the category of $\mathbb{Z}$-graded  $G$-modules with central charge $\lambda(c)$ to the category of $\widehat{\mathfrak{g}}$-modules, which sends a $G$-module $V$ to  $M_{\lambda(c)}(V)$ 
  with a natural weight structure with respect to $H$ as described above. 
 Our goal is to show that Theorem \ref{thm-FK} holds for modules  $M_{a}(V)$ in full generality without the assumption of admissibility. Namely, we have

  \begin{theorem} \label{thm-loop}
Let  $V$ be an irreducible weight $G+ H$-module and $a \neq 0$. Then the $\widehat{\mathfrak{g}}$-module $M_{a}(V)$ is irreducible. In particular, for any $\lambda\in H^*$ the functor  $ \mathbb{I}^{\lambda} $ preserves irreducibility if $\lambda(c)\neq 0$.
    \end{theorem}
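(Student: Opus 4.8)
The plan is to show that any nonzero submodule $N\subseteq M_a(V)$ equals the whole module. Since $M_a(V)\simeq U(\widehat{\overline{\mathfrak u}})\otimes_{\mathbb C}V$ as a vector space and here $\widehat{\mathfrak l}=G+H$ so that $\widehat{\overline{\mathfrak u}}$ involves only the real root spaces $\widehat{\mathfrak g}_{\alpha+n\delta}$ (and no imaginary part), the natural grading invariant to exploit is the $\widehat{\mathfrak u}$-height $\mathsf{ht}_{\widehat{\mathfrak u}}(v)$ introduced just above the theorem (with $\omega=\emptyset$). Pick $0\neq v\in N$ of minimal $\widehat{\mathfrak u}$-height; the goal is to show this minimal height is $0$, i.e. $v\in U(G)\otimes V = V$ (after using that $d$ and $H\cap\mathfrak g$ act as scalars/gradings). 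The first step is therefore a \emph{lowering} argument: as long as $\mathsf{ht}_{\widehat{\mathfrak u}}(v)>0$, one applies suitable root vectors $e_\gamma\in\widehat{\mathfrak g}_\gamma$ with $\gamma\in\widehat{\mathfrak u}$ to strictly decrease the height while staying nonzero, contradicting minimality. The nonvanishing is exactly where $a\neq 0$ must enter: commuting a raising operator from $\widehat{\mathfrak u}$ past the factors in $U(\widehat{\overline{\mathfrak u}})$ produces, among other terms, contributions landing in $G$ (imaginary directions), and the bracket of a real root vector with its opposite produces $c$ acting by the nonzero scalar $a$, which guarantees the relevant leading coefficient does not vanish.

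Once we have reduced to $0\neq v\in N\cap V$, the second step is to show $N\supseteq V$. Here the key is that $V$ is an irreducible weight $(G+H)$-module, but $N\cap V$ is only a priori a $\mathbb C c$-submodule; we need it to be $G+H$-stable. For $h\in H$ this is automatic since $N$ is a weight submodule. For $G$: acting by $G_-$ on $v$ keeps us inside $U(\widehat{\overline{\mathfrak u}})\otimes V$ but pushes into positive $U(\widehat{\overline{\mathfrak u}})$-degree in the imaginary direction — so one must again use a commutation/straightening argument, bringing the imaginary creation operators back down using elements of $\widehat{\mathfrak u}$ (which contains the positive imaginary root spaces not in $G$... but here $\widehat{\mathfrak l}=G+H$ means \emph{all} imaginary root spaces are in $\widehat{\mathfrak l}$, so $\widehat{\mathfrak u}$ is purely real). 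The cleaner route: observe that $G$ acts on $M_a(V)$ and, via the PBW filtration by $\widehat{\mathfrak u}$-height, the induced action of $G$ on the height-zero component $V$ is the original one; combined with the minimal-height argument applied to $x\cdot v$ for $x\in G_\pm$ (which has the same $\widehat{\mathfrak u}$-height $0$ up to lower-order corrections), one deduces $G\cdot v\subseteq N\cap V$ modulo terms of positive height which must then also lie in $N$ and can be lowered back. Hence $N\cap V$ is a nonzero $(G+H)$-submodule of the irreducible $V$, so $N\cap V=V$.

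The final step is then a standard generation argument: $V$ generates $M_a(V)$ over $U(\widehat{\mathfrak g})$ by definition of the induced module, so $N\supseteq U(\widehat{\mathfrak g})V = M_a(V)$, giving $N=M_a(V)$ and irreducibility. Conversely, if $M_a(V)$ is irreducible then $V\hookrightarrow M_a(V)$ forces $V$ to be irreducible over $\widehat{\mathfrak l}$ hence over $G+H$; this gives the "in particular" statement that $\mathbb I^\lambda$ preserves irreducibility when $\lambda(c)\neq 0$, after unwinding that an irreducible $\mathbb Z$-graded $G$-module with the prescribed scalar action of $H\cap\mathfrak g$, $c$ and the grading action of $d$ is precisely an irreducible weight $(G+H)$-module.

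I expect the main obstacle to be the first step — the height-lowering estimate. The subtlety is that $\widehat{\overline{\mathfrak u}}$ is a highly noncommutative nilpotent algebra whose root vectors, when a raising operator $e_\gamma$ ($\gamma\in\widehat{\mathfrak u}$) is commuted through a PBW monomial $u_i\in U(\widehat{\overline{\mathfrak u}})$, generate not only lower-degree terms in $\widehat{\overline{\mathfrak u}}$ but also elements of $\widehat{\mathfrak l}=G+H$ of \emph{imaginary} degree, which then act nontrivially on $V$ and can in principle interfere destructively across the sum $v=\sum_i u_iv_i$. Organizing the bookkeeping so that a single well-chosen $\gamma$ isolates a unique minimal-height "leading term" with a nonzero coefficient — using both the linear independence of the $u_i$, the $Q$-grading, and crucially $a=\lambda(c)\neq 0$ to see the coefficient coming from $[e_\gamma, e_{-\gamma}]=$ (element of $G+H$ evaluating via $\lambda$) does not vanish — is the technical heart of the proof. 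This is presumably where Theorem \ref{thm-1}/\ref{thm-loop} genuinely improves on Theorem \ref{thm-FK}: admissibility was previously used precisely to control this interference, and it is being replaced by a finer analysis of the $\widehat{\mathfrak u}$-height filtration together with the irreducibility of $V$ as a $(G+H)$-module.
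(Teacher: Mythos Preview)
Your overall architecture (induction on $\widehat{\mathfrak u}$-height, reducing to an element of $V$, then invoking irreducibility of $V$) matches the paper's. But the proposal has a genuine gap at precisely the point you flag as the technical heart, and your description of the mechanism there is not correct.

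Concretely: in the base case $R=1$ one has $v=\sum_i x_{-\beta+r_i\delta}\,w_i$ for a simple root $\beta$, and applying $x_{\beta+N\delta}\in\widehat{\mathfrak u}$ gives
\[
x_{\beta+N\delta}\,v=\sum_i [x_{\beta+N\delta},x_{-\beta+r_i\delta}]\,w_i=\sum_i x_{(N+r_i)\delta}\,w_i\in V.
\]
These commutators land in the \emph{imaginary} root spaces $\widehat{\mathfrak g}_{(N+r_i)\delta}\subset G$, not in $H$; so nothing here ``evaluates via $\lambda$'' and the bracket $[e_\gamma,e_{-\gamma}]$ producing $c$ is not what is being used. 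The real question is: can $\sum_i x_{(N+r_i)\delta}w_i$ vanish for \emph{all} large $|N|$, where the $w_i$ are arbitrary weight vectors in an arbitrary $G$-module with central charge $a\neq 0$? The paper answers this with a separate, nontrivial lemma (Lemma~\ref{lem-heis}): for suitable $x_k\in\widehat{\mathfrak g}_{k\delta}$ with $[x_k,x_{-k}]=kc$, at least one of $\sum_i x_{N-k_i}w_i$ and $\sum_i x_{-N-k_i}w_i$ is nonzero for large $N$. Its proof is an induction on $m$ using the commuting operators $\Omega_{N-r,N-s}=x_{N-r}x_{-N-s}-x_{N-s}x_{-N-r}$ to successively eliminate the $w_i$; the hypothesis $a\neq 0$ enters repeatedly through $[x_N,x_{-N}]=Nc$ inside $G$, not through a single evaluation at $\lambda$. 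Your proposal contains no analogue of this lemma, and the ``single well-chosen $\gamma$ isolates a leading term'' picture does not work: there is in general no dominant term, and one must allow both signs of $N$. Note also that irreducibility of $V$ plays no role in this nonvanishing step (only $a\neq 0$ does); it is used only at the very end.

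A smaller point: your Step~2 is unnecessary and somewhat confused. Since $\widehat{\mathfrak l}=G+H$ contains all imaginary root spaces, $\widehat{\overline{\mathfrak u}}$ contains none, and $G$ acts on the height-zero subspace $V\subset M_a(V)$ exactly by the original $G$-action. Thus $N\cap V$ is automatically a nonzero $(G+H)$-submodule of $V$, hence equals $V$, with no straightening argument needed.
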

 
 The following is the key lemma which generalizes \cite[Lemma 2]{FK18}.

 \begin{lemma}\label{lem-heis}
 Let  $V$ be a weight $(G+H)$-module with nonzero scalar action of $c$: $cv=av$ for some $a\in \mathbb{C}\setminus \{0\}$ and any $v\in V$. Also, let $w_1, \ldots, w_m\in V$ be  nonzero elements such that $w_i\in V_{\mu_i}$ for $\mu_i\in H^*$ and ${\mu}_i-{\mu}_{i+1}\in \mathbb{Z}_{>0}\delta$ for all $i$. Let $k_{i}$ be such that $\mu_1-\mu_i = k_{i}\delta$, $i=1, \ldots, m$.
 Then for a sufficiently large $N$ and for any choice of nonzero elements 
 $x_k\in \widehat{\mathfrak{g}}_{k\delta}$ such that $[x_k, x_{-k}]=kc\neq 0$, at least one of the sums
 $$ \sum_{i=1}^m x_{N-k_i} w_i, \,\,\,\,   \sum_{i=1}^m x_{-N-k_i} w_i$$
 is nonzero.
\end{lemma}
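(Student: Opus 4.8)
The plan is to repackage the statement in terms of the Heisenberg current and then run an induction on $m$ (equivalently, a minimal‑counterexample argument), the base case being a one‑line consequence of the centrality of $c$.

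Since $G$ is a Heisenberg algebra, $[\,\widehat{\mathfrak g}_{a\delta},\widehat{\mathfrak g}_{b\delta}\,]=0$ whenever $a+b\neq 0$; moreover, using $\mu_1-\mu_i=k_i\delta$, a weight count shows that all of the vectors $x_{N-k_i}w_i$ lie in a single weight space of $V$ and all of the $x_{-N-k_i}w_i$ in another, so the statement is that a prescribed element of each of two fixed weight spaces does not vanish. Introduce the formal current $x(z)=\sum_{n\in\mathbb{Z}}x_nz^{-n}$ (with the $x_n$ as in the statement) and the $V$‑valued Laurent polynomial $W(z)=\sum_{i=1}^m z^{-k_i}w_i$; then $\sum_i x_{N-k_i}w_i$ is the coefficient of $z^{-N}$ in $x(z)W(z)$ and $\sum_i x_{-N-k_i}w_i$ the coefficient of $z^{N}$, so the Lemma comes down to showing that, for $W\neq 0$ and $c$ acting by $a\neq 0$, the series $x(z)W(z)$ is not a Laurent polynomial. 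For $m=1$ this is the base case: were $x_Nw_1=x_{-N}w_1=0$ for some $N$, then $0=[x_N,x_{-N}]w_1=Na\,w_1$ would contradict $a\neq 0$, $w_1\neq 0$; in fact this shows that for $m=1$ one of the two sums is nonzero for \emph{every} $N\neq 0$.

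For the inductive step I would assume the result for fewer than $m$ vectors and, arguing by contradiction, take a collection $w_1,\dots,w_m$ (with weights as prescribed) realizing the smallest $m$ for which $x(z)W(z)$ is a Laurent polynomial. The single‑term bracket $[x_j,x(z)]=[x_j,x_{-j}]z^{j}=ja\,z^{j}$ gives $x_j\bigl(x(z)W(z)\bigr)=x(z)\bigl(x_jW(z)\bigr)+ja\,z^{j}W(z)$, whence $x(z)\bigl(x_jW(z)\bigr)$ is again a Laurent polynomial for every $j\neq 0$; since $x_jW(z)=\sum_i z^{-k_i}(x_jw_i)$ has at most $m$ nonzero terms, minimality forces $x_jw_i$ to be zero for all $i$ or nonzero for all $i$, and the base case applied to each $w_i$ rules out having $x_jw_i=0$ for all $i$ and $x_{-j}w_i=0$ for all $i$ at once. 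In the original notation this reads: multiplying $\sum_i x_{N-k_i}w_i=0$ on the left by the partner $x_{k_m-N}$ of its extreme term $x_{N-k_m}w_m$ and using $[x_{k_m-N},x_{N-k_m}]=-(N-k_m)c$ yields $(N-k_m)a\,w_m=\sum_{i=1}^m x_{N-k_i}\bigl(x_{k_m-N}w_i\bigr)$, and similarly from $\sum_i x_{-N-k_i}w_i=0$; substituting these back into the two vanishing relations and cancelling the $w_m$‑contributions produces relations among $w_1,\dots,w_{m-1}$ (and their translates), still with weights strictly ordered along $\mathbb{Z}\delta$, to which the inductive hypothesis is meant to apply.

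The part I expect to be the real obstacle is closing this last step: one must show that the reduction \emph{strictly} decreases the number of vectors --- ruling out that the translated vectors $x_{k_m-N}w_i$, $x_{N+k_m}w_i$ all vanish, and that the $w_m$‑cancellation leaves all $m$ vectors --- for which the finiteness of the $k_i$, the nonvanishing of $a$ and the $w_i$, and the hypothesis that $N$ be sufficiently large all get used. I would anticipate separating off the ``one‑sided'' situations, where the support of $V$ along $\mathbb{Z}\delta$ is bounded above or below so that one of the two sums is zero for trivial degree reasons and only the other needs attention, and then handling the remaining case by iterating the reduction until the strictly decreasing number of vectors contradicts the base case. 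The algebraic manipulations are routine; it is this bookkeeping that carries the content.
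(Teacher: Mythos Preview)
Your generating-function repackaging is correct and clean: writing $P(z)=x(z)W(z)$, the contradiction hypothesis is exactly ``$P(z)$ is a Laurent polynomial,'' and the identity $x_jP(z)=x(z)\bigl(x_jW(z)\bigr)+jaz^{j}W(z)$ does force $x(z)\bigl(x_jW(z)\bigr)$ to be a Laurent polynomial as well. Your dichotomy (for each $j$, either every $x_jw_i$ vanishes or none does) then follows from minimality. But this is exactly where the argument stalls, and you have correctly located the obstacle. When all $x_jw_i\neq 0$ you have only produced another $m$-tuple satisfying the same failure, not a shorter one; and your proposed elimination of $w_m$ via $(N-k_m)a\,w_m=\sum_i x_{N-k_i}\bigl(x_{k_m-N}w_i\bigr)$ replaces the $w_i$ by vectors $x_{k_m-N}w_i$ that \emph{depend on $N$}, so the inductive hypothesis (which needs a fixed tuple and a running $N$) cannot be invoked. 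This is not bookkeeping: a genuinely new mechanism is needed to decouple the index $i$ from the parameter $N$.

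The paper supplies that mechanism. Instead of a single bracket it introduces the second-order operators $\Omega_{N-r,N-s}=x_{N-r}x_{-N-s}-x_{N-s}x_{-N-r}$, obtained by hitting the two vanishing relations with $x_{\mp N}$ and subtracting. Already for $m=2$ one gets $\Omega_{N-k_2,N}(w_2)=Na\,w_1$ and $\Omega_{N-k_2,N}(w_1)=0$; the decisive step is to use \emph{two independent large parameters} $N_1\gg N_2$: applying $\Omega_{N_1-k_2,N_1}$ to the vanishing relation at level $\pm N_2$ yields $x_{\pm N_2-k_2}w_1=0$ for all large $N_2$, contradicting the base case. For general $m$ the paper sets up $m-1$ such $\Omega$-relations, brings in a whole hierarchy $t_1\gg t_2\gg\cdots\gg t_{m-2}\gg N$ of parameters to eliminate $w_2,\dots,w_{m-1}$, and arrives at a polynomial operator $\Omega$ (built from commuting $\Omega_{N-k_r,N-k_s}$'s and extra $x_{t_j-\ell}$'s) annihilating $w_1$; a further descent on the top index appearing in $\Omega$ then forces $w_1=0$. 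The multi-parameter $\Omega$-calculus is precisely the device that separates the running parameter from the fixed tuple, and it is what your one-step reduction is missing.
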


\begin{proof}
Suppose that 
 $$ \sum_{i=1}^m x_{N-k_i} w_i =\sum_{i=1}^m x_{-N-k_i} w_i=0$$
 for all  sufficiently large $N$.  We proceed by the induction on $m$. First assume that $m=1$, that is $x_N w=x_{-N}w=0$ for all  sufficiently large $N$ and some nonzero $w\in V$. Then we immediately get a contradiction as for any such $N$ we have
 $$0=[x_N, x_{-N}]w=Ncw=Naw\neq 0.$$
 To explain the idea of the induction step consider the case $m=2$ and assume that 
 $$ x_{N} w_1+ x_{N-k_2}w_2 = x_{-N} w_1+ x_{-N-k_2}w_2 =0$$
  for all  sufficiently large $N$. Introduce the following operators for all pairs $(r,s)\in \mathbb{Z}^2$:
  $$\Omega_{N-r,N-s}:= x_{N-r} x_{-N-s}- x_{N-s} x_{-N-r}.$$  Note that if $N-r \neq N+s$, then $\Omega_{N-r,N-s} = x_{-N-s} x_{N-r}- x_{N-s} x_{-N-r}.$ 
  Applying $x_{-N}$ and $x_N$ respectively to the equalities above we get
  $$x_{-N}x_{N} w_1+ x_{-N}x_{N-k_2}w_2 =x_{N} x_{-N} w_1+ x_{N}x_{-N-k_2}w_2 =0.$$ 
  Subtracting these equalities we obtain
   $$(x_{-N}x_{N}-x_{N} x_{-N}) w_1+ (x_{-N}x_{N-k_2}-x_{N}x_{-N-k_2})w_2 =-Naw_1+\Omega_{N-k_2, N}(w_2) =0,$$  
  implying $\Omega_{N-k_2, N}(w_2) =Na w_1$. Let us show that $\Omega_{N-k_2, N}(w_1)=0$. Indeed,
  $$\Omega_{N-k_2, N}(w_1)=(x_{-N}x_{N-k_2}-x_{N}x_{-N-k_2})w_1=x_{N-k_2}(x_{-N}w_1)-x_{-N-k_2}(x_{N}w_1)=$$
  $$=x_{N-k_2}(-x_{-N-k_2}w_2)-x_{-N-k_2}(-x_{N-k_2}w_2)=0,$$
  since $k_2>0$. Choose $N_1>>N_2>>0$. Then we have
  $$0=\Omega_{N_1-k_2, N_1}( x_{N_2} w_1+ x_{N_2-k_2}w_2)=x_{N_2}\Omega_{N_1-k_2, N_1}(w_1)+x_{N_2-k_2}\Omega_{N_1-k_2, N_1}(w_2)=x_{N_2-k_2}(N_{1}a w_1)$$
  and $x_{N_2-k_2}w_1=0$ for any sufficiently large $N_2$. Similarly,
  $$0=\Omega_{N_1-k_2, N_1}( x_{-N_2} w_1+ x_{-N_2-k_2}w_2)=x_{-N_2}\Omega_{N_1-k_2, N_1}(w_1)+x_{-N_2-k_2}\Omega_{N_1-k_2, N_1}(w_2)=x_{-N_2-k_2}(N_{1}a w_1)$$
 and $x_{-N_2-k_2}w_1=0$ for any sufficiently large $N_2$. We conclude that $x_N w_1=x_{-N}w_1=0$  
  for all sufficiently large $N$, which a contradiction as $a\neq 0$. 
  
  Suppose that the statement holds for any set of $<m$ elements.  Applying $x_{-N}$ and $x_N$ respectively to the equalities above we get
  $$x_{-N} \sum_{i=1}^m x_{N-k_i} w_i =x_N \sum_{i=1}^m x_{-N-k_i} w_i=0.$$ 
  Subtracting these equalities we obtain
 $$(x_{-N}x_{N}-x_{N} x_{-N}) w_1+ \sum_{i=2}^m \Omega_{N-k_i, N}w_i =-Naw_1+ \sum_{i=2}^m \Omega_{N-k_i, N}(w_i) =0$$
 and $$\sum_{i=2}^m \Omega_{N-k_i, N}(w_i) =Naw_1.$$
  We also have for each $i=2, \ldots, m$
  $$\Omega_{N-k_i, N}(w_1)=(x_{-N}x_{N-k_i}-x_{N}x_{-N-k_i})w_1=x_{N-k_i}(x_{-N}w_1)-x_{-N-k_i}(x_{N}w_1)=
$$
  $$=x_{N-k_i}(-\sum_{j=2}^m x_{-N-k_j}w_j)-x_{-N-k_i}(-\sum_{j=2}^m x_{N-k_j}w_j)=\sum_{j=2, j\neq i}^m (-x_{N-k_i}x_{-N-k_j}+x_{-N-k_i}x_{N-k_j})w_j$$
  
  and 
    \begin{equation}\label{system}
  \Omega_{N-k_i, N}(w_1)=\sum_{j=2, j\neq i}^m \Omega_{N-k_j, N-k_i}(w_j), \,\,\, i=2,\ldots, m.
     \end{equation} 
  Note that
  $$\Omega_{N-k_j, N-k_i}+\Omega_{N-k_i, N-k_j}=0$$
  for all $i,j$ and $[\Omega_{N-k_j, N-k_i}, \Omega_{N-k_s, N-k_r}]=0$ for all $i,j,r,s=2, \ldots, m$.
We can view the equalities \eqref{system} as  a linear system of $m-1$ equations with operator commuting coefficients $\Omega_{N-k_j, N-k_i}$. Note that $m-2$ equations contain $w_m$.

Choose arbitrary sufficiently large  $t_1>>t_2>>\ldots >> t_{m-2}>>N$ and multiply the $j$-th equation by  $x_{t_j-k_m}$ for each $j=1, \ldots,  m-2$. Recall that we have 
$$ \sum_{i=1}^m x_{t_j-k_i} w_i=\sum_{i=1}^m x_{-t_j-k_i} w_i=0, \,\,\, j=1, \ldots,  m-2,$$
by our assumption. Then we can  replace  $x_{t_j-k_m}w_m$  in the $j$-th equation by $- \sum_{i=1}^{m-1} x_{t_j-k_i} w_i$.  We obtain $m-1$ equations with parameters $t_j$, $j=1, \ldots,  m-2$. 
 Using multiplication and addition with will eliminate $w_2, \ldots, w_{m-1}$ and obtain the equality
 $\Omega(w_1)=0,$ 
  where $\Omega$ is a certain polynomial operator spanned by the products of $x_{t_j- l_j}$ for some $l_j\geq 0$, 
  $j=1, \ldots,  m-2$, and $\Omega_{N-k_r, N-k_s}$, $r,s=1, \ldots, m$.  Let $K>> t_1$. Then 
  $$0=\Omega \sum_{i=1}^m x_{\pm K-k_i} w_i= \sum_{i=1}^m x_{\pm K-k_i}  \Omega(w_i)=\sum_{i=2}^m x_{\pm K-k_i}  \Omega(w_i).$$
  If $\Omega(w_i)\neq 0$  for at least one index $i=2, \ldots, m$, then we can apply the induction hypothesis and obtain the contradiction. It remains to consider the case when $\Omega(w_i)= 0$ for all $i$. 
  
   Choose the largest index $n$ for which $x_n$ appears in $\Omega$ and consider the equality
  $$ \sum_{i=1}^m x_{-t_1-k_i} w_i=0.$$
  Applying $\Omega$ we obtain
  $$0=[\Omega, \sum_{i=1}^m x_{-t_1-k_i}] w_i=[\Omega, x_{-t_1}]w_1,$$
  implying that $\Omega^1 w_1=0$ where $\Omega^1$ is a polynomial operator with degree of $x_{t_1}$ reduced by $1$ (as $[x_{t_1}, x_{-t_1}]=at_1$). Again we can assume that $\Omega^1 w_i=0$ for all $i$ (otherwise apply the induction hypothesis). 
  Continuing  this way we eliminate all $x_{t_j}$, $j=1, \ldots,  m-2$ and obtain a polynomial operator  $\Omega^2$
  which is spanned by the products of $\Omega_{N-k_r, N-k_s}$, $r,s=1, \ldots, m$, and $\Omega^2w_1=0$. Again we can assume that $\Omega^2 w_i=0$ for all $i$. Note that each monomial in $\Omega^2$ contains exactly one factor of type $\Omega_{N-k_i, N}$, $i>2$ and these factors do not comute. All other factors that appear in $\Omega^2$ commute. Hence, each monomial in $\Omega^2$ contains at most one factor $x_N$ and  
    $\Omega^2=f x_N+g$, where $f$ and $g$ do not contain $x_N$ and contain only factors $x_k$  with $k<N$. 
   Consider the equality
  $$ \sum_{i=1}^m x_{-N-k_i} w_i=0$$
  and apply $\Omega$. Then we get 
   $$0=[\Omega^2, \sum_{i=1}^m x_{-N-k_i}] w_i=[\Omega^2, x_{-N}]w_1=f[x_N, x_{-N}]w_1=Na f w_1.$$   
We conclude that $f w_1=0$ and hence $f w_i=0$ for all $i$. Choose the factor in $f$ with the largest index and repeat the argument. Continuing we will obtain $w_1=0$ which is a contradiction. This completes the proof. 
\end{proof}

\medskip

Now we proceed with the proof of Theorem \ref{thm-loop}.
Let  $V$ be an irreducible weight $(G+ H)$-module. Consider the $\widehat{\mathfrak{g}}$-module $M_{a}(V)$ and assume $a \neq 0$. 
Let  $v\in M_{a}(V)$ be a
nonzero  element. We will show that $v$ generates  $M_{a}(V)$. We can assume that $v$ is a 
weight element
and $$v=\sum_{i} u_iw_i,$$
where $u_i\in U(\widehat{\overline{\mathfrak{u}}})$ are linearly independent homogeneous elements and 
$w_i\in V_{\mu_{i}}$  for all $i$.   Let $R=\mathsf {ht}_{\widehat{\mathfrak{u}}}(v)$. We will proceed by the induction on $R$.  Suppose first that $R=1$.   Then $u_i=x_{-\beta+r_i\delta}\in \widehat{\mathfrak{g}}_{-\beta+r_i\delta}$ for some simple root $\beta$ of $\widehat{\mathfrak{g}}$. 
For an integer $N$ consider a nonzero
 $x_{\beta+ N\delta}\in \widehat{\mathfrak{g}}_{\beta+ N\delta}$. Then 
 $$x_{\beta+ N\delta} v= x_{\beta+ N\delta}\sum_{i} u_i w_i=\sum_{i}[x_{\beta+ N\delta}, u_i] w_i=\sum_i x_{(N+r_i)\delta}w_i \in V.$$
By Lemma \ref{lem-heis}, $x_{\beta+ N\delta} v\neq 0$ if $|N|$ is sufficiently large.
 Since $V$ is irreducible then $v$ generates $M_{a}(V)$. 

Assume now that $R>1$ and assume that any nonzero weight element of height $<R$ generates $M_{a}(V)$. We follow the proof of  \cite[Lemma 5.3]{BBFK}.   Suppose that  $u_i=x_{-\beta_{i1}+ r_{i1}\delta}^{p_{i1}}\ldots x_{-\beta_{is_i}+ r_{is_i}\delta}^{p_{is_{i}}}$ for some simple roots 
$\beta_{i1}, \ldots, \beta_{is_i}$ and some integers $r_{i1}, \ldots, r_{is_i}$. We can  assume without loss of generality that the summands are  indexed in such a way that $r_{1s_1} \leq r_{2s_2} \leq \ldots $  and if $r_{ij} = r_{1s_{1}}$ then $p_{1s_{1}}$ is minimal. Applying $x_{\beta_{11}+ N\delta}$   with sufficiently large $|N|$ we get 
$x_{\beta_{11}+ N\delta} v\neq 0$ again by Lemma \ref{lem-heis}. Since
$\mathsf {ht}_{\widehat{\mathfrak{u}}}(x_{\beta_{11}+ N\delta}v)=R-1$, the element $x_{\beta_{11}+ N\delta}v$ generates $M_{a}(V)$ by induction. If  $u_i$ contains a factor   $x_{-\beta+ r\delta}$ with non-simple 
 positive root $\beta$ for some $i$, then choose a simple root $\alpha$  such that $\alpha- \beta$ is a root 
 and apply $x_{\alpha+ N\delta}$  with sufficiently large $|N|$. Then  $x_{\alpha+ N\delta}v\neq 0$ and $\mathsf {ht}_{\widehat{\mathfrak{u}}}(x_{\alpha+ N\delta}v)=R-1$. Again the induction completes the proof.

\begin{corollary}\label{cor-part}
Let $\widehat{\mathfrak p} =\widehat{\mathfrak l}\oplus  \widehat{\mathfrak u}$ be a parabolic subalgebra of 
$\widehat{\mathfrak g}$ of type II and  $\widehat{\mathfrak l}=G' + H$, where $G'$ is a subalgebra of $G$.  
Let $a\in \mathbb{C}\setminus \{0\}$ and let
$V$ be an irreducible weight $\widehat{\mathfrak l}$-module of central charge $a$. Then 
 $M_{a, \widehat{\mathfrak{p}}}(V)$ is irreducible, and hence the restriction of the functor $\mathbb{I}_{a, \widehat{\mathfrak{p}}}$ to the category of weight $\widehat{\mathfrak l}$-modules with  central charge $a$ preserves the irreducibility. 
\end{corollary}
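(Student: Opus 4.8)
The plan is to deduce Corollary~\ref{cor-part} from Theorem~\ref{thm-loop} by factoring the induction through an intermediate parabolic subalgebra whose Levi factor is the whole $G+H$. Write $\mathfrak{a}=G\cap\widehat{\mathfrak{u}}$ and $\overline{\mathfrak{a}}=G\cap\widehat{\overline{\mathfrak{u}}}$ for the imaginary parts of the nilradical and of the opposite radical, and let $\widehat{\mathfrak{u}}^{\flat}$ be the sum of the real root spaces contained in $\widehat{\mathfrak{u}}$. Since $\widehat{\mathfrak{l}}=G'+H$ contains no real root spaces and $\widehat{\mathfrak{p}}$ is of type II, one has $\widehat{\mathfrak{u}}^{\flat}=\bigoplus_{\alpha\in\dot\Delta_{+},\,n\in\mathbb{Z}}\widehat{\mathfrak{g}}_{\alpha+n\delta}$, so that $\widehat{\mathfrak{p}}^{\flat}:=(G+H)\oplus\widehat{\mathfrak{u}}^{\flat}$ is a parabolic subalgebra of $\widehat{\mathfrak{g}}$ of type II with infinite-dimensional Levi factor $G+H$, and $\widehat{\mathfrak{p}}=\widehat{\mathfrak{l}}\oplus\widehat{\mathfrak{u}}\subseteq(G+H)\oplus\widehat{\mathfrak{u}}^{\flat}=\widehat{\mathfrak{p}}^{\flat}$ (using $G=G'\oplus\mathfrak{a}\oplus\overline{\mathfrak{a}}$ and $\widehat{\mathfrak{u}}=\widehat{\mathfrak{u}}^{\flat}\oplus\mathfrak{a}$, the real and imaginary parts of $\widehat{\mathfrak{u}}$). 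I would then put $\widetilde V:=U(\widehat{\mathfrak{p}}^{\flat})\otimes_{U(\widehat{\mathfrak{p}})}V$ and check, using that $\widehat{\mathfrak{u}}^{\flat}$ is an ideal of $\widehat{\mathfrak{p}}^{\flat}$ contained in $\widehat{\mathfrak{u}}$, that $\widehat{\mathfrak{u}}^{\flat}\cdot\widetilde V=0$; by the PBW theorem $\widetilde V\cong U(\overline{\mathfrak{a}})\otimes V$ as a vector space, and it is the weight $(G+H)$-module induced from the subalgebra $(G'+H)\oplus\mathfrak{a}$ of $G+H$ with $\mathfrak{a}\cdot V=0$. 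By transitivity of induction $M_{a,\widehat{\mathfrak{p}}}(V)\cong U(\widehat{\mathfrak{g}})\otimes_{U(\widehat{\mathfrak{p}}^{\flat})}\widetilde V=\mathbb{I}^{\lambda}(\widetilde V)$ for a suitable $\lambda$ with $\lambda(c)=a$; hence by Theorem~\ref{thm-loop} it suffices to prove that $\widetilde V$ is an irreducible $(G+H)$-module.

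For that I would argue directly inside the Heisenberg algebra, exploiting that $\overline{\mathfrak{a}}$ is abelian, so $U(\overline{\mathfrak{a}})=S(\overline{\mathfrak{a}})$. Fix a homogeneous basis $\{y_{j}\}$ of $\overline{\mathfrak{a}}$ with $y_{j}\in\widehat{\mathfrak{g}}_{-m_{j}\delta}$, $m_{j}>0$, and for $0\neq v'\in\widetilde V$ write $v'=\sum_{\mathbf{n}}\mathbf{y}^{\mathbf{n}}\otimes w_{\mathbf{n}}$ uniquely, with $\mathbf{y}^{\mathbf{n}}=\prod_{j}y_{j}^{n_{j}}$ and $w_{\mathbf{n}}\in V$; let $d$ be the largest length $|\mathbf{n}|=\sum_{j}n_{j}$ occurring. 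If $d=0$ then $v'\in 1\otimes V$ and, $V$ being irreducible over $G'+H$, $U(G+H)v'\supseteq U(\overline{\mathfrak{a}})U(G'+H)v'=U(\overline{\mathfrak{a}})(1\otimes V)=\widetilde V$. If $d\geq 1$, choose $y_{j_{0}}$ occurring in some length-$d$ monomial of $v'$; since the Killing form pairs $\overline{\mathfrak{a}}\cap\widehat{\mathfrak{g}}_{-m_{j_{0}}\delta}$ perfectly with $\mathfrak{a}\cap\widehat{\mathfrak{g}}_{m_{j_{0}}\delta}$, after adjusting the basis there is $x_{0}\in\mathfrak{a}\cap\widehat{\mathfrak{g}}_{m_{j_{0}}\delta}$ with $[x_{0},y_{j}]=\delta_{j,j_{0}}\,c$. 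Because $x_{0}\in\widehat{\mathfrak{u}}$ annihilates $1\otimes V$, because $\overline{\mathfrak{a}}$ is abelian, and because $c$ acts by $a$, one computes $x_{0}v'=a\sum_{\mathbf{n}:\,n_{j_{0}}\geq 1}n_{j_{0}}\,\mathbf{y}^{\mathbf{n}-e_{j_{0}}}\otimes w_{\mathbf{n}}$; this lies in $U(G+H)v'$, is nonzero by the choice of $y_{j_{0}}$, and has leading length $d-1$. Induction on $d$ then shows that every nonzero $v'$ generates $\widetilde V$, i.e.\ that $\widetilde V$ is irreducible; combined with the previous paragraph and Theorem~\ref{thm-loop} this gives irreducibility of $M_{a,\widehat{\mathfrak{p}}}(V)$, and the assertion about the functor $\mathbb{I}_{a,\widehat{\mathfrak{p}}}$ follows at once.

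The points that need care are all in the first step: verifying that $\widehat{\mathfrak{u}}^{\flat}$ indeed acts by zero on $\widetilde V$ (which rests on $[\widehat{\mathfrak{u}}^{\flat},\overline{\mathfrak{a}}]\subseteq\widehat{\mathfrak{u}}^{\flat}$ together with PBW reordering) and that the induced $(G+H)$-module structure on $U(\overline{\mathfrak{a}})\otimes V$ is the expected one, as well as the structural facts that $G=G'\oplus\mathfrak{a}\oplus\overline{\mathfrak{a}}$ and that this decomposition is orthogonal for the Killing form in the way used above --- all of which are consequences of the compatibility of the Levi decomposition of a parabolic subalgebra with the invariant form. Once these are in place, no analogue of the more delicate Lemma~\ref{lem-heis} is required here: the key computation $x_{0}v'\neq 0$ is immediate precisely because $\overline{\mathfrak{a}}$ is abelian and $[\mathfrak{a},\overline{\mathfrak{a}}]$ is central, and the genuine difficulty has already been absorbed into the proof of Theorem~\ref{thm-loop}.
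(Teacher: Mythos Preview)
Your proof is correct and follows the same strategy as the paper's one-line argument: factor the induction through the intermediate parabolic $\widehat{\mathfrak{p}}^{\flat}$ with Levi $G+H$, show that the intermediate module $\widetilde V$ is an irreducible weight $(G+H)$-module, and then apply Theorem~\ref{thm-loop}. The only difference is expository: the paper simply cites the standard fact that a highest weight module for an infinite-dimensional Heisenberg subalgebra with nonzero central charge is irreducible (which, together with $[G',\mathfrak{a}\oplus\overline{\mathfrak{a}}]=0$, gives $\widetilde V\cong V\otimes\mathrm{Fock}$ irreducible), whereas you supply an explicit Fock-space differentiation argument for the same step.
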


\begin{proof}
Follows from Theorem \ref{thm-loop} and from the fact that a highest weight module of an infinite dimensional Heisenberg 
subalgebra is irreducible if the central charge is nonzero.

\end{proof}

Theorem \ref{thm-loop} and  is a key step to approach the irreducibility of generalized Imaginary Verma modules, which  
 we are going to do in  the following subsection.

\medskip

\subsection{Irreducibility of generalized Imaginary Verma modules}

Recall that $\widehat{\mathfrak p}=\widehat{\mathfrak{l}} \oplus \widehat{\mathfrak{u}} $ is  a parabolic subalgebra of $\widehat{\mathfrak g}$ of type II, $\widehat{\mathfrak{l}} = \widehat{\mathfrak{l}^0} + G(\widehat{\mathfrak l})^{\perp}$, where  $\widehat{\mathfrak{l}^0}$ is the Lie subalgebra generated by $H$ and by all real root subspaces of $\widehat{\mathfrak{l}}$ and $G(\widehat{\mathfrak l})^{\perp}$ is the orthogonal complement to the imaginary part of $\widehat{\mathfrak{l}^0}$  with respect to the Killing form.

Tensor $\widehat{\mathfrak{l}}$-modules have the form  $M \otimes S$, where  
  $M$ is a weight (with respect to $H$) $\widehat{\mathfrak{l}^0}$-module with central charge $a$ and $S$ is a $G(\widehat{\mathfrak l})^{\perp}\oplus \mathbb C d$-module with the same central charge and diagonalizable action of $d$.  Modules over $G(\widehat{\mathfrak l})^{\perp}\oplus \mathbb C d$ correspond to $\mathbb Z$-graded modules  (with grading compatible with the $\mathbb Z$-grading of $ G(\widehat{\mathfrak l})^{\perp}$)     with a fixed scalar action of $d$ on a chosen nonzero component. We will say that a $G(\widehat{\mathfrak l})^{\perp}\oplus \mathbb C d$-module $S$ is strongly irreducible if it is irreducible as a (non-graded) 
 $G(\widehat{\mathfrak l})^{\perp}$-module. Verma modules with nonzero central charge and irreducible diagonal modules \cite{BBFK} are examples of such modules.
  
   Though we do not have a classification of all irreducible tensor $\widehat{\mathfrak{l}}$-modules, the following easy statement shows how to construct families of such modules.
  
  \begin{proposition}\label{prop-irr-tensor}
Let $M$ be an irreducible weight  $\widehat{\mathfrak{l}^0}$-module with central charge $a$, $S$  a  strongly irreducible  $\mathbb Z$-graded $G(\widehat{\mathfrak l})^{\perp}\oplus \mathbb C d$-module with the same central charge. Then   $M \otimes S$  is an irreducible weight $\widehat{\mathfrak{l}}$-module with diagonalizable tensor product action of $d$.
 \end{proposition}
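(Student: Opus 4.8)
The plan is to exploit the decomposition $\widehat{\mathfrak l}=\widehat{\mathfrak{l}^0}+G(\widehat{\mathfrak l})^{\perp}$ and the way these two subalgebras act on $M\otimes S$: the subalgebra $G(\widehat{\mathfrak l})^{\perp}$ acts through the second tensor factor only, $\widehat{\mathfrak{l}^0}$ acts through the first factor except that the degree derivation $d\in\widehat{\mathfrak{l}^0}$ acts on both, and $c$ acts by the scalar $a$; concretely $x\cdot(m\otimes s)=(\rho_M(x)m)\otimes s$ for $x$ in the span of the root spaces of $\widehat{\mathfrak{l}^0}$, of $\mathfrak h=H\cap\mathfrak g$ and of $c$, while $d\cdot(m\otimes s)=(\rho_M(d)m)\otimes s+m\otimes\rho_S(d)s$. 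First I would record that $M\otimes S$ is a weight $\widehat{\mathfrak l}$-module: $\mathfrak h$ acts diagonalizably through the first factor, and since $M$ is $d$-diagonalizable and $d$ acts as a scalar on each graded component of $S$, the diagonal operator $d$ is diagonalizable on $M\otimes S$.

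The core step is to determine the $G(\widehat{\mathfrak l})^{\perp}$-submodules of $M\otimes S$. Since $S$ is strongly irreducible it is irreducible as a non-graded $G(\widehat{\mathfrak l})^{\perp}$-module, and being cyclic over the countable-dimensional algebra $U(G(\widehat{\mathfrak l})^{\perp})$ it has countable dimension over $\mathbb C$; hence Dixmier's version of Schur's lemma yields $\mathrm{End}_{G(\widehat{\mathfrak l})^{\perp}}(S)=\mathbb C$, and Jacobson's density theorem applies to $S$. Choosing a basis $\{m_i\}$ of $M$, as a $G(\widehat{\mathfrak l})^{\perp}$-module $M\otimes S$ is the direct sum of the copies $m_i\otimes S\cong S$ of a single simple module. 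I would then show that every $G(\widehat{\mathfrak l})^{\perp}$-submodule $W\subseteq M\otimes S$ has the form $N\otimes S$ for a subspace $N\subseteq M$: if $0\ne w=\sum_{j=1}^{r}m_j\otimes s_j\in W$ with the $s_j$ linearly independent, density provides $u_j\in U(G(\widehat{\mathfrak l})^{\perp})$ with $\rho_S(u_j)s_j=s_j$ and $\rho_S(u_j)s_k=0$ for $k\ne j$, so that $m_j\otimes s_j=u_j w\in W$ and hence $m_j\otimes S=m_j\otimes U(G(\widehat{\mathfrak l})^{\perp})s_j\subseteq W$; then $N:=\{m\in M:m\otimes S\subseteq W\}$ is a subspace with $W=N\otimes S$.

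Finally I would take an arbitrary nonzero $\widehat{\mathfrak l}$-submodule $W\subseteq M\otimes S$, write $W=N\otimes S$ with $0\ne N\subseteq M$ by the previous step, and use that $W$ is stable under $\widehat{\mathfrak{l}^0}$. For $x$ in the span of the root spaces of $\widehat{\mathfrak{l}^0}$, of $\mathfrak h$ and of $c$ we get $(\rho_M(x)n)\otimes s=x\cdot(n\otimes s)\in N\otimes S$, forcing $\rho_M(x)n\in N$ for all $n\in N$; and $d\cdot(n\otimes s)=(\rho_M(d)n)\otimes s+n\otimes\rho_S(d)s$, where the second summand lies in $N\otimes S$, so again $\rho_M(d)n\in N$. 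Hence $N$ is an $\widehat{\mathfrak{l}^0}$-submodule of $M$, and irreducibility of $M$ gives $N=M$, i.e. $W=M\otimes S$; therefore $M\otimes S$ is irreducible.

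The only genuinely nontrivial ingredient is the description of the $G(\widehat{\mathfrak l})^{\perp}$-submodules of $M\otimes S$ in the second step, which rests on $\mathrm{End}_{G(\widehat{\mathfrak l})^{\perp}}(S)=\mathbb C$; this is exactly where strong irreducibility of $S$ (as opposed to mere graded irreducibility) and the countability of $\dim_{\mathbb C}U(G(\widehat{\mathfrak l})^{\perp})$ are needed. Everything else is bookkeeping about commuting actions, the one subtlety being the roles of the scalar $c$ and the diagonal $d$ in the tensor module structure.
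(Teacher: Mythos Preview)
Your argument is correct. The paper states this proposition without proof, introducing it as an ``easy statement'' and leaving the verification to the reader, so there is no proof in the paper to compare against. Your write-up supplies exactly the standard argument one would expect here: countable dimensionality of $U(G(\widehat{\mathfrak l})^{\perp})$ together with strong irreducibility of $S$ gives $\mathrm{End}_{G(\widehat{\mathfrak l})^{\perp}}(S)=\mathbb C$ via Dixmier's form of Schur's lemma; Jacobson density then forces every $G(\widehat{\mathfrak l})^{\perp}$-submodule of $M\otimes S$ to be of the form $N\otimes S$; and checking that $N$ is stable under the remaining generators of $\widehat{\mathfrak{l}^0}$ (handling $d$ separately because of its diagonal action) reduces the claim to the irreducibility of $M$. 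The one place where care is genuinely needed---namely that strong irreducibility rather than graded irreducibility of $S$ is used to obtain $\mathrm{End}_{G(\widehat{\mathfrak l})^{\perp}}(S)=\mathbb C$---you identify explicitly.
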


We have the following generalization of Theorem \ref{thm-FK}.

\begin{theorem} \label{thm-main}
Let $a\in \mathbb{C}\setminus \{0\}$ and let $V = M \otimes S$ be an irreducible weight tensor $\widehat{\mathfrak{l}}$-module, where
 $M$ is a weight $\widehat{\mathfrak{l}^0}$-module and $S$ is a ${G}(\widehat{\mathfrak l})^{\perp}\oplus \mathbb C d$-module  with diagonalizable action of $d$.   Consider $V$ as a $\widehat{\mathfrak{p}}$-module with trivial action of the radical $\widehat{\mathfrak{u}}$. 
 Then the generalized Imaginary Verma $\widehat{\mathfrak g}$-module $\mathbb{I}_{a, \widehat{\mathfrak{p}}}(V)=M_{a,\widehat{\mathfrak{p}}}(V)$ is irreducible.

\end{theorem}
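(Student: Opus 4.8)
The plan is to reduce the general tensor-module case to the generalized loop module situation handled by Theorem \ref{thm-loop} (equivalently Corollary \ref{cor-part}), combined with the known structure of induced modules over the ``real part'' $\widehat{\mathfrak{l}^0}$. The starting observation is that the parabolic $\widehat{\mathfrak{p}} = \widehat{\mathfrak{l}} \oplus \widehat{\mathfrak{u}}$ sits inside a larger chain of parabolics: writing $\widehat{\mathfrak{l}} = \widehat{\mathfrak{l}^0} + G(\widehat{\mathfrak{l}})^{\perp}$, one factors the induction $U(\widehat{\mathfrak g}) \otimes_{U(\widehat{\mathfrak p})} V$ through an intermediate parabolic whose Levi factor is $G + H$ (or a suitable subalgebra thereof). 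Concretely, using the PBW decomposition $M_{a,\widehat{\mathfrak p}}(V) \simeq U(\widehat{\overline{\mathfrak u}}) \otimes_{\mathbb C} V$ as a vector space, and the fact that $\widehat{\overline{\mathfrak u}}$ is built from the negative real root spaces not already in $\widehat{\mathfrak l}$ together with (part of) the opposite Heisenberg directions, the first step is to realize $M_{a,\widehat{\mathfrak p}}(V)$ as an induced module in two stages: first induce $V = M \otimes S$ along the real directions of $\widehat{\mathfrak l}^0$ to get a module over a parabolic with Heisenberg-type Levi factor, then apply the imaginary induction of Theorem \ref{thm-loop}.

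The key technical step is the analogue of Lemma \ref{lem-heis} in this broader setting, controlling how weight vectors in $M_{a,\widehat{\mathfrak p}}(V)$ can be pushed down in $\widehat{\mathfrak u}$-height by acting with root vectors $x_{\beta + N\delta}$, $\beta \in \Delta^{\omega,\mathrm{re}}$ (or $\beta$ a root of $\widehat{\mathfrak u}$), for $|N|$ large. As in the proof of Theorem \ref{thm-loop}, I would take a nonzero weight element $v = \sum_i u_i v_i$ with $u_i \in U(\widehat{\overline{\mathfrak u}})$ linearly independent homogeneous, induct on $R = \mathsf{ht}_{\widehat{\mathfrak u}}(v)$, and in the base case $R = 1$ write $u_i = x_{-\beta + r_i\delta}$ so that $x_{\beta + N\delta} v = \sum_i x_{(N+r_i)\delta} v_i \in V$ lands inside $V$; crucially, since $a \neq 0$ and $V = M \otimes S$ with $S$ a weight module over the relevant part of the Heisenberg, Lemma \ref{lem-heis} (applied to the $G$-action, using that the imaginary part of $\widehat{\mathfrak l}$ together with $G(\widehat{\mathfrak l})^\perp$ accounts for all of $G$) guarantees this is nonzero for suitable $N$. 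For the inductive step $R > 1$ I would order the summands by the largest $\delta$-degree appearing in $u_i$ and (following \cite[Lemma 5.3]{BBFK} as in the proof of Theorem \ref{thm-loop}) peel off one simple-root factor at a time, each time producing a nonzero element of strictly smaller height, which generates $M_{a,\widehat{\mathfrak p}}(V)$ by induction; a separate sub-case handles $u_i$ containing a factor $x_{-\beta + r\delta}$ with $\beta$ non-simple, resolved by acting with $x_{\alpha + N\delta}$ for a simple $\alpha$ with $\alpha - \beta \in \Delta$.

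Once any nonzero weight $v$ is shown to generate a submodule containing a nonzero element of $V \subset M_{a,\widehat{\mathfrak p}}(V)$, irreducibility follows: $V$ itself is an irreducible $\widehat{\mathfrak l}$-module by hypothesis, so the $\widehat{\mathfrak p}$-submodule generated is all of $V$, and then the argument of Theorem \ref{thm-loop} / Theorem \ref{thm-FK} shows $U(\widehat{\mathfrak g}) \cdot V = M_{a,\widehat{\mathfrak p}}(V)$ — here one uses that raising operators $x_{\gamma + N\delta}$ for $\gamma$ a root of $\widehat{\mathfrak u}$, applied to $V$, together with the already-established $\widehat{\mathfrak l}$-irreducibility of $V$, generate everything by a height argument. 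The main obstacle I anticipate is the bookkeeping in the height-reduction lemma when $\widehat{\mathfrak u}$ is genuinely larger than the imaginary directions: one must check that acting with $x_{\beta + N\delta}$ for $\beta \in \dot\Delta$ interacts correctly with the tensor-product structure $M \otimes S$ — in particular that the ``$M$-part'' of the commutators does not conspire to cancel the ``$S$-part'' — and that the non-vanishing input to Lemma \ref{lem-heis} really does survive after projecting onto a single weight space of $M$. This is where the hypothesis $a \neq 0$ and the diagonalizability of $d$ on $S$ are used essentially, exactly as in the loop-module case, so I expect the reduction to go through, but verifying that all the commutator terms land in the expected weight spaces will be the delicate part.
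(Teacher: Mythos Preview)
Your overall architecture---height induction on $R=\mathsf{ht}_{\widehat{\mathfrak u}}(v)$, with Lemma \ref{lem-heis} driving the base case and \cite[Lemma 5.3]{BBFK} the inductive step---matches the paper. But the base case $R=1$ as you describe it is the loop-module case, not the general one, and the difference is exactly the obstacle you flag at the end without resolving.

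When $\widehat{\mathfrak l}$ is strictly larger than $G+H$, the homogeneous elements $d_i\in U(\widehat{\overline{\mathfrak u}})$ of $\widehat{\mathfrak u}$-height $1$ are not $x_{-\beta+r_i\delta}$ but $x_{-\beta+\alpha_i+r_i\delta}$ with $\alpha_i\in Q(\mathfrak l)$, so after applying a raising operator $x_N=x_{\beta-\alpha_1+(N-r_1)\delta}$ the commutators $y_N^i=[x_N,d_i]$ land in $\widehat{\mathfrak l}$, not in $G$. The paper's key move---absent from your proposal---is to split $y_N^i=D_N^i+Z_N^i$ with $D_N^i\in\widehat{\mathfrak l}^0$ and $Z_N^i\in G(\widehat{\mathfrak l})^\perp$, so that $y_N^i(v_i\otimes w_i)=D_N^iv_i\otimes w_i+v_i\otimes Z_N^iw_i$. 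Then the $\mathbb Z$-grading on $S$ (the diagonalizable $d$-action) separates the two kinds of terms, isolating the equation $\sum_i v_i\otimes Z_N^iw_i=0$. Only after a further linear-independence reduction on the $v_i$'s (grouping terms by a maximal independent subset) does one reach a relation $\sum_j a_j Z_N^j w_j=0$ in $S$ to which Lemma \ref{lem-heis} applies; one also needs the observation that when the $\alpha_j$'s coincide, the $Z_N^j$'s are proportional and a central-charge argument finishes. Your text does not supply this decomposition or the grading/independence argument, and without them the ``$M$-part'' really can cancel the ``$S$-part'' on the nose---so Lemma \ref{lem-heis} cannot be invoked directly on $V$ as you suggest.

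Separately, the two-stage induction you sketch at the outset does not reduce the problem to Theorem \ref{thm-loop}: the tensor module $V=M\otimes S$ is irreducible over $\widehat{\mathfrak l}$, but there is no reason for it (or any intermediate induced module) to be irreducible over $G+H$, so the hypothesis of Theorem \ref{thm-loop} is not available at the intermediate stage.
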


\begin{proof}
 Consider an arbitrary nonzero element of $v\in M_{a,\widehat{\mathfrak{p}}}(V)$. We can assume that $v$ is a weight element. Write $v$ as the following finite sum
$$v=\sum_{i} d_i (v_i\otimes w_i),$$ where 
 $d_i\in U(\widehat{\overline{\mathfrak{u}}})$, $v_i\in M$, $w_i\in S$, and we assume that $v_i\otimes w_i$ are linearly independent. 
Since $v$ is a weight vector then each $d_i$ is a homogeneous element of $U(\widehat{\overline{\mathfrak{u}}})$.
  Let $R=\mathsf {ht}_{\widehat{\mathfrak{u}}}(v)$. We proceed by the induction on $R$.  Suppose  that $R=1$. 
  As in the proof of Theorem \ref{thm-loop} this is the most important case. Then for all $i$ we have $d_i=x_{-\beta+\alpha_i +r_i\delta}\in \widehat{\mathfrak{g}}_{-\beta+\alpha_i +r_i\delta}$ for some simple root $\beta$ of $\widehat{\mathfrak{g}}$ and some $\alpha_i\in Q(\mathfrak{l})$, where $Q(\mathfrak{l})$ is the root lattice  of the underlined finite dimensional subalgebra $\mathfrak{l}$ of $\widehat{\mathfrak{l}}$. Write $\alpha_i$ as a linear combination of simple roots of $\mathfrak{l}$ with integer coefficients. The $\mathfrak{l}$-height $ht_{\mathfrak{l}}(d_i)$ of $d_i$ is defined as a sum of all coefficients of such linear combination (note that $ht_{\mathfrak{l}}(d_i) \leq 0$ for all $i$).
   Choose $i$ such that $d_i$ has the least $\mathfrak{l}$-height which equals $t$. There can be several $d_j$'s with the same $\mathfrak{l}$-height $t$. Among all of them choose  one with the least $r_i$. Without loss of generality we may assume that $i=1$ satisfies these conditions (not necessarily unique).   We claim that there exists an integer $N$ with sufficiently large $|N|$ and a nonzero  $x_{\beta- \alpha_1 +(N-r_1)\delta} \in \widehat{\mathfrak{g}}_{\beta- \alpha_1 +(N-r_1)\delta}$  (we assume that $\beta- \alpha_1 +(N-r_1)\delta$ is a root and that $|N|>> |r_1|$) such that $x_{\beta- \alpha_1 +(N-r_1)\delta}v\neq 0$. For simplicity write $x_{\beta- \alpha_1 +(N-r_1)\delta}$ by $x_N$.  \ Suppose this is not the case 
  and we have
  $$x_N v=x_N\sum_{i} d_i (v_i\otimes w_i)=\sum_{i} [x_N, d_i] (v_i\otimes w_i)=0$$ for all such $N$.   
  
  Set $y_N^i=[x_N, d_i]$. 
  Note that $y_N^1=[x_N, d_1]\in \widehat{\mathfrak{g}}_{N\delta}$ is nonzero and recall that $\widehat{\mathfrak{l}} = \widehat{\mathfrak{l}}^0 + G(\widehat{\mathfrak l})^{\perp} $ and $[z, G(\widehat{\mathfrak l})^{\perp}] = 0$ for all root elements $z\in \widehat{\mathfrak{l}}^0$.  Write each $y_N^i$ as follows:
  $y_N^i=D_N^i + Z_N^i$, where $D_N^i \in \widehat{\mathfrak{l}}^0$ and $Z_N^i\in G(\widehat{\mathfrak l})^{\perp}$. Clearly, $[D_N^i, Z_N^i]=0$ for all $i$ and $Z_N^1\neq 0$ (it is obvious for any $N$ if $\widehat{\mathfrak{g}}$ is an untwisted affine Lie algebra as $d_{1} \in U(\widehat{\overline{\mathfrak{u}}})$. In the case of twisted affine Lie algebras $N$ can be chosen to guarantee this condition). 
  
 Then 
  $$x_{N}v=v_1\otimes Z_N^1 w_1 + D_N^1 v_1\otimes  w_1 + 
  \sum_{i>1} (v_i\otimes Z_{N}^i w_i+ D_{N}^i v_i\otimes w_i)=0.$$
  Because of the $\mathbb Z$-grading on $S$, the equality above implies
  $$ v_1\otimes Z_N^1 w_1 + \sum_{i>1}v_i\otimes Z_{N}^i w_i =0.$$
 If $v_i$'s are linearly independent then we get a contradiction as either  $Z_N^1 w_1\neq 0$ or  $Z_{-N}^1 w_1\neq 0$. If they are linearly dependent then choose a maximal linearly independent subset
 indexed for simplicity as follows $v_1, v_2, \ldots, v_k$, and
  rewrite the sum above as 
  $$ v_1\otimes (\sum_{j\in Y_1}a_j^1 Z_N^j w_j) + \sum_{i=2}^k v_i\otimes (\sum_{j\in Y_i}a_j^i Z_N^j w_j) =0,$$
 where $Y_1, \ldots, Y_k$ are certain subsets of the set of indices and $a_j^i$ are nonzero scalars. Using now the 
 linear independence of $v_1, \ldots, v_k$ we get  that 
 $$\sum_{j\in Y_i}a_j^i Z_N^j w_j=0$$
 for all $i=1, \ldots, k$.  Consider for example $i=1$. Rewrite the sum above for $i=1$ in the following form:
  $$\sum_{j\in \tilde{Y}_1}a_j^1 Z_N^j \tilde{w}_j=0, $$
  where all $\tilde{w}_j$ are weight elements of $S$ with different weights and $\tilde{Y}_1$ is a subset of $Y_1$. 
  Then we get a contradiction with Lemma \ref{lem-heis}. Hence we can assume that for all $j\in Y_1$, the elements $w_j$  have the same weight in $S$ and $Z_N^j\in \widehat{\mathfrak{g}}_{N\delta}$. Moreover, as $\alpha_r=\alpha_s$ for all $r, s \in Y_1$, then 
    $Z_N^r$ and $Z_N^s$ are the same up to a scalar.
  Suppose that 
  $w_j$, $j\in Y_1$ are linearly independent. Then we get 
  $$0=Z_N \sum_{j\in Y_1} \hat{w}_j=Z_{-N} \sum_{j\in Y_1} \hat{w}_j,$$
  for some nonzero  $Z_{\pm N} \in \widehat{\mathfrak{g}}_{\pm N\delta}$ and some linearly independent elements $\hat{w}_j$ $j\in Y_1$. As the central charge nonzero we have $\sum \limits_{j\in Y_1} \hat{w}_j=0$,  which is a contradiction. 
 Finally, if  $w_j$, $j\in Y_1$ are linearly dependent then $v_1\otimes w_j$ are linearly dependent which contradicts our original assumption. This completes the proof for $R=1$.
 
 Suppose now that $R>1$. Since the proof of the induction step in \cite[Lemma 5.3]{BBFK} does not depend on the module $V$, the same argument applies in our case. Hence there exists $x \in \widehat{\mathfrak{u}}$ such that $xv\neq 0$.
 Since $\mathsf {ht}_{\widehat{\mathfrak{u}}}(xv)=R-1$, the proof is completed by induction. 
 
\end{proof}

 Denote by $\mathcal T_a(\widehat{\mathfrak{l}})$  the category of tensor $\widehat{\mathfrak{l}}$-modules with central charge $a$. The restriction of the imaginary induction functor $\mathbb{I}_{a, \widehat{\mathfrak{p}}}$ 
 on $\mathcal T_a(\widehat{\mathfrak{l}})$ defines the functor  $\mathbb{I}_{a, \widehat{\mathfrak{p}}}^{\mathcal T}$   from the category of tensor $\widehat{\mathfrak{l}}$-modules  with central charge $a$ to the category of weight $\widehat{\mathfrak g}$-modules.
 
As a consequence of Theorem \ref{thm-main} we immediately obtain the following corollary which implies Theorem \ref{thm-2}.

\begin{corollary}\label{cor-functor-irred}
Let $\widehat{\mathfrak p} =\widehat{\mathfrak l}\oplus  \widehat{\mathfrak u}$ be a parabolic subalgebra of 
$\widehat{\mathfrak g}$ of type II, $V\in\mathcal T_a(\widehat{\mathfrak{l}})$ an irreducible $\widehat{\mathfrak{l}}$-module and  $a\in \mathbb{C}\setminus \{0\}$. Then  the functor
 $\mathbb{I}_{a, \widehat{\mathfrak{p}}}^{\mathcal{T}}$  preserves the irreducibility. 
\end{corollary}

\medskip

\subsection{Localization of generalized Imaginary Verma modules}

    Let $\widehat{\mathfrak{g}}$ be an affine Kac-Moody algebra.  For a real positive root $\alpha \in \Delta^{\rm re}_+$  of $\widehat{\mathfrak{g}}$, we define the \emph{twisting functor} 
\begin{align*} 
  T_\alpha=T_{f_\alpha} \colon \mathcal{M}(\widehat{\mathfrak{g}}) \rightarrow \mathcal{M}(\widehat{\mathfrak{g}}) \qquad \text{and} \qquad T_{-\alpha} =T_{e_\alpha} \colon \mathcal{M}(\widehat{\mathfrak{g}}) \rightarrow \mathcal{M}(\widehat{\mathfrak{g}}) 
\end{align*} 
by 
\begin{align*} 
  T_\alpha(M) = U(\widehat{\mathfrak{g}})_{(f_\alpha)}/U(\widehat{\mathfrak{g}}) \otimes_{U(\widehat{\mathfrak{g}})}\   M, 
  \qquad 
  T_{-\alpha}(M) = U(\widehat{\mathfrak{g}})_{(e_\alpha)}/U(\widehat{\mathfrak{g}}) \otimes_{U(\widehat{\mathfrak{g}})}\   M 
\end{align*} 
for $\alpha \in \Delta^{\rm re}_+$ and $M \in \mathcal{M}(\widehat{\mathfrak{g}})$.

Consider a parabolic subalgebra $\widehat{\mathfrak{p}}=\widehat{\mathfrak{l}}\oplus \widehat{\mathfrak{u}}$ of $\widehat{\mathfrak{g}}$ of type II.
Denote by $\Delta^{\rm re}(\widehat{\mathfrak{l}})\subset \Delta^{\rm re}$ the set of real roots of $\widehat{\mathfrak{l}}$.
 The link between the twisting functor $T_\alpha$ for $\alpha \in \Delta^{\rm re}(\widehat{\mathfrak{l}})$ and the Imaginary induction functor $ \mathbb{I}_{a, \widehat{\mathfrak{p}}}$ is given in the following theorem.

\medskip

\begin{theorem}\label{thm:twisting functor intertwining}
Let $\alpha \in \Delta^{\rm re}(\widehat{\mathfrak{l}})\subset \Delta^{\rm re}$. Then there exists a natural isomorphism
\begin{align*}
  \eta_\alpha \colon T_\alpha \circ  \mathbb{I}_{a, \widehat{\mathfrak{p}}} \rightarrow  \mathbb{I}_{a, \widehat{\mathfrak{p}}} \circ \  T_\alpha^{\widehat{\mathfrak{l}}}
\end{align*}
of functors, where $T_\alpha^{\widehat{\mathfrak{l}}} \colon \mathcal{M}(\widehat{\mathfrak{l}}) \rightarrow \mathcal{M}(\widehat{\mathfrak{l}})$ is the twisting functor for $\widehat{\mathfrak{l}}$ assigned to $\alpha$. 

\end{theorem}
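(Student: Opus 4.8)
The plan is to construct $\eta_\alpha$ explicitly on the level of the underlying vector spaces and then check that it is a morphism of $\widehat{\mathfrak g}$-modules, natural in $M \in \mathcal{M}(a,\widehat{\mathfrak l})$. Recall that $T_\alpha(N) = U(\widehat{\mathfrak g})_{(f_\alpha)}/U(\widehat{\mathfrak g}) \otimes_{U(\widehat{\mathfrak g})} N$, which as a vector space is the localization $N_{(f_\alpha)}$ modulo $N$ (more precisely $U(\widehat{\mathfrak g})_{(f_\alpha)}\otimes_{U(\widehat{\mathfrak g})}N$ with the image of $U(\widehat{\mathfrak g})\otimes N$ quotiented out); here $f_\alpha$ is a root vector in $\widehat{\mathfrak g}_{-\alpha}$, and since $\alpha \in \Delta^{\rm re}(\widehat{\mathfrak l})$ we have $f_\alpha \in U(\widehat{\mathfrak l}) \subset U(\widehat{\mathfrak p})$. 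The key structural fact I would exploit is the PBW-type decomposition $\widehat{\mathfrak g} = \widehat{\overline{\mathfrak u}} \oplus \widehat{\mathfrak p}$ together with $\widehat{\mathfrak p} = \widehat{\mathfrak l}\oplus\widehat{\mathfrak u}$, so that $M_{a,\widehat{\mathfrak p}}(V) \simeq U(\widehat{\overline{\mathfrak u}})\otimes_{\mathbb C} V$ as vector spaces, and $\mathrm{ad}(f_\alpha)$ acts locally nilpotently on $U(\widehat{\overline{\mathfrak u}})$ because $-\alpha$ lies on the side of the nilradical $\widehat{\mathfrak u}$ relative to the $\widehat{\mathfrak l}$-real roots (so iterated brackets of $f_\alpha$ with a fixed root vector in $\widehat{\overline{\mathfrak u}}$ eventually leave $\widehat{\overline{\mathfrak u}}$ and land in $\widehat{\mathfrak p}$, or vanish). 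This local nilpotence is exactly what makes the localization at $f_\alpha$ well-behaved.

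The construction I would write down is the following. Start from the commutation in $U(\widehat{\mathfrak g})_{(f_\alpha)}$: for $u \in U(\widehat{\overline{\mathfrak u}})$ one has $f_\alpha^{-n} u = \sum_{j\ge 0}\binom{-n}{j}(\mathrm{ad}\, f_\alpha)^j(u)\, f_\alpha^{-n-j}$, a finite sum by local nilpotence. Using this, every element of $T_\alpha(\mathbb{I}_{a,\widehat{\mathfrak p}}(M)) = (U(\widehat{\overline{\mathfrak u}})\otimes M)_{(f_\alpha)}/(U(\widehat{\overline{\mathfrak u}})\otimes M)$ can be rewritten, modulo the image of $U(\widehat{\mathfrak g})\otimes(U(\widehat{\overline{\mathfrak u}})\otimes M)$, in a normal form $\sum u_k \otimes f_\alpha^{-k}\cdot(1\otimes m_k)$ with $u_k\in U(\widehat{\overline{\mathfrak u}})$ and $f_\alpha^{-k}$ acting only on the $M$-slot; since $f_\alpha\in U(\widehat{\mathfrak l})$, the expression $f_\alpha^{-k}\otimes m$ lands in $T_\alpha^{\widehat{\mathfrak l}}(M) = U(\widehat{\mathfrak l})_{(f_\alpha)}/U(\widehat{\mathfrak l})\otimes_{U(\widehat{\mathfrak l})} M$, and on the target side $\mathbb{I}_{a,\widehat{\mathfrak p}}(T_\alpha^{\widehat{\mathfrak l}}(M)) \simeq U(\widehat{\overline{\mathfrak u}})\otimes_{\mathbb C} T_\alpha^{\widehat{\mathfrak l}}(M)$. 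I would define $\eta_\alpha$ on $M_{a,\widehat{\mathfrak p}}(M)$ to send this normal form to $\sum u_k \otimes \big(f_\alpha^{-k}\otimes m_k\big)$, check it is independent of the chosen representative (using the two PBW identifications and the fact that $\widehat{\mathfrak u}$ acts trivially on $V$ on both sides), and verify $\widehat{\mathfrak g}$-equivariance by checking it on generators $e_\beta, f_\beta, h$ — the only nontrivial case being the interaction with $f_\alpha^{-1}$, where both sides transform by the same formal localization rule. Naturality in $M$ is then immediate from functoriality of localization and of induction, since a morphism $M\to M'$ induces compatible maps of the localized tensor products.

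The main obstacle, and the part deserving the most care, is establishing that $\mathrm{ad}(f_\alpha)$ is locally nilpotent on $U(\widehat{\overline{\mathfrak u}})$ and, more delicately, that $f_\alpha$ acts \emph{injectively} (hence the localization $M_{a,\widehat{\mathfrak p}}(M)_{(f_\alpha)}$ is a genuine Ore localization and the normal form is unique). Local nilpotence follows from the combinatorics of the root decomposition: $\alpha\in\Delta^{\rm re}(\widehat{\mathfrak l})$ means $\alpha$ is orthogonal, in the relevant sense, to the grading that separates $\widehat{\overline{\mathfrak u}}$ from $\widehat{\mathfrak p}$, so repeatedly subtracting $\alpha$ from a root in $-\widehat{\overline{\mathfrak u}}$'s support stays within a string that eventually exits — one has to invoke that $f_\alpha$ generates, together with its $\mathfrak{sl}_2$-partner $e_\alpha$ and $h_\alpha$, a copy of $\mathfrak{sl}_2$ inside $\widehat{\mathfrak l}$ acting on $\widehat{\mathfrak g}$ with finite-dimensional isotypic pieces in each $Q$-graded component, which gives the bound. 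Injectivity of $f_\alpha$ on $M_{a,\widehat{\mathfrak p}}(M)$ reduces, via the PBW decomposition $M_{a,\widehat{\mathfrak p}}(M)\simeq U(\widehat{\overline{\mathfrak u}})\otimes M$ and the locally-nilpotent commutation formula, to injectivity of $f_\alpha$ on $M$ as a $\widehat{\mathfrak l}$-module — but this need not hold for arbitrary $M$, so one either works with the Ore localization formally (the twisting functor is defined for all of $\mathcal{M}(\widehat{\mathfrak g})$ regardless, via $U(\widehat{\mathfrak g})_{(f_\alpha)}$, which exists because $\mathrm{ad}(f_\alpha)$ is locally nilpotent on $U(\widehat{\mathfrak g})$) and checks the isomorphism at the level of these formal localizations, matching $U(\widehat{\mathfrak g})_{(f_\alpha)}\otimes_{U(\widehat{\mathfrak p})}M$ with $U(\widehat{\overline{\mathfrak u}})\otimes_{\mathbb C} U(\widehat{\mathfrak l})_{(f_\alpha)}\otimes_{U(\widehat{\mathfrak l})}M$ by a PBW argument inside the localized enveloping algebra. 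That last algebra isomorphism $U(\widehat{\mathfrak g})_{(f_\alpha)}\otimes_{U(\widehat{\mathfrak p})}(-)\;\simeq\;U(\widehat{\overline{\mathfrak u}})\otimes_{\mathbb C}\big(U(\widehat{\mathfrak l})_{(f_\alpha)}\otimes_{U(\widehat{\mathfrak l})}(-)\big)$, proved by showing $U(\widehat{\mathfrak g})_{(f_\alpha)} = U(\widehat{\overline{\mathfrak u}})\cdot U(\widehat{\mathfrak p})_{(f_\alpha)}$ as a consequence of PBW plus the commutation formula, is the technical heart, and once it is in place the natural transformation $\eta_\alpha$ and its invertibility follow formally.
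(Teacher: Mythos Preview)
Your proposal is correct and takes essentially the same route as the paper's proof, which follows \cite[Theorem~4.16]{Futorny-Krizka2021}: both use the PBW decomposition of $U(\widehat{\mathfrak g})_{(f_\alpha)}$ and give the explicit isomorphism (and its inverse) via the commutation formula $f_\alpha^{-n} u \otimes v \mapsto \sum_{k\ge 0} (-1)^k \binom{n+k-1}{k}\, \mathrm{ad}(f_\alpha)^k(u) \otimes f_\alpha^{-n-k} v$ --- note your $\binom{-n}{j}$ equals $(-1)^j\binom{n+j-1}{j}$, so the formulas coincide. Your digression on injectivity of $f_\alpha$ on $M$ is unnecessary, as you yourself conclude: working directly with the Ore localization $U(\widehat{\mathfrak g})_{(f_\alpha)}$ (which exists because $\alpha$ is real, hence $\mathrm{ad}(f_\alpha)$ is locally nilpotent on $U(\widehat{\mathfrak g})$) sidesteps the issue entirely.
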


\begin{proof} We will follow the proof of \cite[Theorem 4.16]{Futorny-Krizka2021}  as the argument works for any parabolic subalgebra of $\widehat{\mathfrak{g}}$. Let $\alpha \in \Delta^{\rm re}(\widehat{\mathfrak{l}})$ be a positive root and $f_{\alpha}\in \widehat{\mathfrak{g}}_{\alpha}$ a nonzero element. Case of negative roots can be treated similarly. 
By  the Poincar\'e--Birkhoff--Witt theorem we have the following isomorphism
\begin{align*}
  U(\widehat{\mathfrak{g}})_{(f_\alpha)} \simeq U(\widehat{\mathfrak{\overline{u}}})_{(f_\alpha)} \otimes_\mathbb{C}U(\widehat{\mathfrak{p}})
\end{align*}
of $U(\widehat{\mathfrak{\overline{u}}})$-modules. Then for a $\widehat{\mathfrak{g}}$-module $V$ we have
\begin{align*}
  U(\widehat{\mathfrak{g}})_{(f_\alpha)} \otimes_{U(\widehat{\mathfrak{g}})} \mathbb{I}_{a,\widehat{\mathfrak{p}}}(V) \simeq  U(\widehat{\mathfrak{g}})_{(f_\alpha)} \otimes_{U(\widehat{\mathfrak{g}})} U(\widehat{\mathfrak{g}}) \otimes_{U(\widehat{\mathfrak{p}})} V \simeq U(\widehat{\mathfrak{\overline{u}}})_{(f_\alpha)} \otimes_\mathbb{C}V.
\end{align*}

Hence we get an isomorphism
\begin{align*}
  (T_\alpha \circ \mathbb{I}_{a,\widehat{\mathfrak{p}}})(V) \simeq (U(\widehat{\mathfrak{\overline{u}}})_{(f_\alpha)}/U(\widehat{\mathfrak{\overline{u}}})) \otimes_\mathbb{C}V
\end{align*}
of $U(\widehat{\mathfrak{\overline{u}}})$-modules. But we also have an isomorphism of $U(\widehat{\mathfrak{\overline{u}}})$-modules
\begin{align*}
  (\mathbb{I}_{a,\widehat{\mathfrak{p}}} \circ T_\alpha^{\widehat{\mathfrak{l}}})(V) \simeq U(\widehat{\mathfrak{\overline{u}}}) \otimes_\mathbb{C}T_\alpha^{\widehat{\mathfrak{l}}}(V).
\end{align*}

The correspondence
\begin{align*}
  f_\alpha^{-n}u \otimes v \mapsto \sum_{k=0}^\infty (-1)^k \binom{n+k-1}{k} \operatorname{ad}(f_\alpha)^k(u) \otimes f_\alpha^{-n-k}v
\end{align*}
defines the isomorphism of $U(\widehat{\mathfrak{\overline{u}}})$-modules and of $U(\widehat{\mathfrak{g}})$-modules
with the inverse
\begin{align*}
  u \otimes f_\alpha^{-n}v \mapsto \sum_{k=0}^\infty \binom{n+k-1}{k} f_\alpha^{-n-k}\operatorname{ad}(f_\alpha)^k(u) \otimes v
\end{align*}
for all $u \in U(\widehat{\mathfrak{\overline{u}}})$, $v \in V$ and any $n \in \mathbb{N}$.
\end{proof}

\section{Realization of induced modules}

\subsection{Generalized Imaginary Wakimoto modules}

Let $\mathfrak{g}$ be a simple finite dimensional Lie algebra with Cartan decomposition $\mathfrak{g}=\overline{\mathfrak{n}}\oplus \mathfrak{h}\oplus \mathfrak{n}$.
From now on we assume that $\widehat{\mathfrak{g}}$ is untwisted affine Kac-Moody algebra, that is
$$\widehat{\mathfrak{g}}=\mathfrak{g}\otimes \mathbb{C}[t, t^{-1}]\oplus \mathbb{C}c\oplus \mathbb{C} d,$$ 
where $c$ is the central element and $d$ is a degree derivation: $[d, x\otimes t^n]=n(x\otimes t^n)$. It will be convenient to replace the ring of Laurent polynomials $\mathbb{C}[t, t^{-1}]$ by the field $\mathbb{C}((t))$. 

The natural Borel subalgebra $\widehat{\mathfrak{b}}_{\textrm{nat}}$ is defined by $\widehat{\mathfrak{b}}_{\textrm{nat}}  =  H \oplus \widehat{\mathfrak{n}}_{\textrm{nat}}$, where
\begin{eqnarray*}
\widehat{\mathfrak{n}}_{\textrm{nat}} & = & \Big(\mathfrak{n} \otimes_{\mathbb{C}} \mathbb{C}((t)) \Big) \oplus \Big( \mathfrak{h} \otimes_{\mathbb{C}} t\mathbb{C}[[t]] \Big), \\
H & = & \left( \mathfrak{h} \otimes_{\mathbb{C}} \mathbb{C}1\right) \oplus \mathbb{C}c \oplus \mathbb{C}d,\\
\widehat{\overline{\mathfrak{n}}}_{\textrm{nat}} & = & \Big( \overline{\mathfrak{n}} \otimes_{\mathbb{C}} \mathbb{C}((t)) \Big) \oplus \Big( \mathfrak{h} \otimes_{\mathbb{C}} t^{-1}\mathbb{C}[t^{-1}] \Big).
\end{eqnarray*}

Let $\mathfrak{p} = \mathfrak{l} \oplus \mathfrak{u}$ be a parabolic subalgebra of $\mathfrak{g}$, where $\mathfrak{l} $ is the Levi subalgebra and $\mathfrak{u}$ is the radical of $\mathfrak{p}$. Let $\overline{\mathfrak{u}}$
be the opposite radical of $\mathfrak{p}$.

 Then the \emph{natural parabolic subalgebra} of 
  $\widehat{\mathfrak{g}}$ associated with $\mathfrak{p}$ is defined as
  $$\widehat{\mathfrak{p}}_{\textrm{nat}}
  =\widehat{\mathfrak{l}}_{\textrm{nat}} \oplus \widehat{\mathfrak{u}}_{\textrm{nat}},$$  
where 
$\widehat{\mathfrak{l}}_{\textrm{nat}}  =  \Big(\mathfrak{l} \otimes_{\mathbb{C}} \mathbb{C}((t))  \Big) \oplus \mathbb{C} c \oplus \mathbb{C}d $ 
and 
$\widehat{\mathfrak{u}}_{\textrm{nat}}  =   \mathfrak{u} \otimes_{\mathbb{C}} \mathbb{C}((t))$.

The opposite radical of $\widehat{\mathfrak{p}}_{\textrm{nat}}$ is
$\widehat{\overline{\mathfrak{u}}}_{\textrm{nat}}  =   \overline{\mathfrak{u}} \otimes_{\mathbb{C}} \mathbb{C}((t)) $ and we have the  decomposition  
$$\widehat{\mathfrak{g}} = \widehat{\overline{\mathfrak{u}}}_{\textrm{nat}} \oplus \widehat{\mathfrak{l}}_{\textrm{nat}} \oplus \widehat{\mathfrak{u}}_{\textrm{nat}}.$$

  Note that $\widehat{\mathfrak{p}}_{\textrm{nat}}$ is a parabolic subalgebra of type II since $\widehat{\mathfrak{l}}_{\textrm{nat}}$ contains the Heisenberg subalgebra $G$. 
If $\mathfrak{p} = \mathfrak{b}$, that is, $\mathfrak{l} = \mathfrak{h}$ and $\mathfrak{u} = \mathfrak{n}$, we get  $\widehat{\mathfrak{p}}_{\textrm{nat}} = \widehat{\mathfrak{b}}_{\textrm{nat}} \oplus \Big(\mathfrak{h} \otimes_{\mathbb{C}} t^{-1}\mathbb{C}[t^{-1}]\Big)$.
On the other hand, if $\mathfrak{p} = \mathfrak{g}$ we have that $\widehat{\mathfrak{g}}_{\textrm{nat}} = \widehat{\mathfrak{g}}$.

\medskip

Let $\mathcal{K}=\mathbb C((t))$ and let  $\Omega_\mathcal{K}=\mathbb C((t))\,dt$ be the module of Kähler differentials. For a finite-dimensional complex vector space $U$ we define the infinite-dimensional complex vector spaces $\mathcal{K}(U)=U\otimes_\mathbb C \mathcal{K}$ and $\Omega_\mathcal{K}(U^*)=U^*\otimes_\mathbb C \Omega_\mathcal{K}$. 
Denote by $Pol \, \Omega_\mathcal{K}(U^*)$ the algebra of polynomial functions on $\Omega_\mathcal{K}(U^*)$.

Denote by $\dot\Delta(\mathfrak{u})$ the set of  roots $\alpha$ of $\mathfrak{g}$ such that  $\mathfrak{g}_{\alpha}\in \mathfrak{u}$.
Let $\{f_{\alpha} \ | \ \alpha \in \dot\Delta(\mathfrak{u})\}$ be a basis of $\overline{\mathfrak{u}}$ and let $\{x_{\alpha} \ | \ \alpha \in \dot\Delta(\mathfrak{u})\}$ be the linear coordinate functions on  $\overline{\mathfrak{u}}$ with respect to given basis of $\overline{\mathfrak{u}}$, that is, $x_{\alpha}(f_{\beta}) = \delta_{\alpha,\beta}$. Then the set  $\{f_{\alpha,n} = f_{\alpha} \otimes t^{n} \ | \ \alpha \in \dot\Delta(\mathfrak{u}), n \in \mathbb{Z}\}$ forms a topological basis of $\mathcal{K}(\overline{\mathfrak{u}}) = \widehat{\overline{\mathfrak{u}}}_{\textrm{nat}}$, and the set $\{x_{\alpha,-n}=x_{\alpha} \otimes t^{n-1}dt \ | \ \alpha \in \dot\Delta(\mathfrak{u}), n \in \mathbb{Z}\}$  forms a dual topological basis of $ \Omega_{\mathcal{K}}(\overline{\mathfrak{u}}^{\ast}) \simeq (\widehat{\overline{\mathfrak{u}}}_{\textrm{nat}})^{\ast}$.

Consider the infinite rank Weyl algebra $\mathcal{A}_{\mathcal{K}(\overline{\mathfrak{u}})}$  topologically generated by $$\{x_{\alpha,n}, \partial_{x_{\alpha,n}} \ | \ \alpha \in \dot\Delta(\mathfrak{u}), n \in \mathbb{Z}\},$$
such that $a_{\alpha,n} = \partial_{x_{\alpha,n}}$ and $a_{\alpha,n}^{\ast} = x_{\alpha,-n}$, for $\alpha$ $\in$ $\dot\Delta(\mathfrak{u})$, $n \in \mathbb{Z}$.

For $a$ $\in$ $\mathfrak{g}$ we define the formal distribution $a(z)$ $\in$ $\widehat{\mathfrak{g}}[[z^{\pm 1}]]$ by
$$a(z) = \sum_{n \in \mathbb{Z}} a_{n}z^{-n-1}$$
where $a_{n} = a \otimes t^{n}$ for $n$ $\in$ $\mathbb{Z}$. 
 Introduce the formal distributions $a_{\alpha}(z), a_{\alpha}^{\ast}(z)$ $\in$ $\mathcal{A}_{\mathcal{K}(\overline{\mathfrak{u}})}[[z^{\pm 1}]]$ by
\emph{\begin{eqnarray*}
a_{\alpha}(z) = \sum_{n \in \mathbb{Z}} a_{\alpha,n} z^{-n-1} & \emph{and} & a_{\alpha}^{\ast}(z) = \sum_{n \in \mathbb{Z}} a_{\alpha,n}^{\ast}z^{-n}
\end{eqnarray*}}

Inspired by the Feigin-Frenkel homomorphism for vertex algebras the following homomorphism of associative  algebras was established in \cite{FKS19}.

Let $\sigma: \widehat{\mathfrak{p}}_{\textrm{nat}} \rightarrow \mathfrak{gl}(V)$ be a $\widehat{\mathfrak{p}}_{\textrm{nat}}$-module such that $\sigma(c) = k \cdot \textrm{id}_V$ for $k \in \mathbb{C}$.

\begin{theorem}\cite[Theorem 3.6]{FKS19}\label{thm-FKS}
There exists a homomorphism of associative  algebras
\begin{align*}
  \pi_{\sigma(c),\mathfrak{g}} = \pi \colon U(\widehat{\mathfrak{g}}) \rightarrow \smash{{\mathcal{A}}}_{\mathcal{K}(\overline{\mathfrak{u}})} \widehat{\otimes}_\mathbb C\, U(\widehat{\mathfrak{h}}) 
  \end{align*}

  given by $\pi(c) =  \sigma(c)$ and
	\begin{eqnarray*}
	\pi(b(z)) & = & -\sum_{\alpha \in \dot\Delta(\mathfrak{u})} b_{\alpha}(z) \left[ \dfrac{\textrm{ad}(u(z))e^{\textrm{ad}(u(z))}}{e^{\textrm{ad}(u(z))} - \textrm{id}}(e^{-\textrm{ad}(u(z))}(b))_{\overline{\mathfrak{u}}}\right]_{\alpha} + (e^{-\textrm{ad}(u(z))}b(z))_{\mathfrak{p}} + \nonumber \\
	& & - \left(\dfrac{e^{\textrm{ad}(u(z))} - \textrm{id}}{\textrm{ad}(u(z))}\partial_{z}u(z),b \right)\sigma(c)
	\end{eqnarray*}
	for all $b$ $\in$ $\mathfrak{g}$, where
	\begin{eqnarray*}
	u(z) & = &  \sum_{\alpha \in \dot\Delta(\mathfrak{u})} b_{\alpha}^{\ast}(z) f_{\alpha}
	\end{eqnarray*}
and $[X]_{\alpha}$ is the $\alpha$-th coordinate of $X \in \overline{\mathfrak{u}}$ with respect to the  basis $\{f_{\alpha} \ | \ \alpha \in \dot\Delta(\mathfrak{u})\}$ of $\overline{\mathfrak{u}}$.

\end{theorem}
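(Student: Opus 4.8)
\textbf{Strategy.} The plan is to realize $\widehat{\mathfrak{g}}$ by first-order differential operators on the ``big cell'' of the affine homogeneous space attached to $\widehat{\mathfrak{p}}_{\mathrm{nat}}$, twisted by the $U(\widehat{\mathfrak{h}})$-action coming from $\sigma$, and then to repair the failure of this naive action to lift to the central extension by an explicit anomaly (normal-ordering) term. It suffices to check that $b\mapsto \pi(b(z))$, together with $\pi(c)=\sigma(c)$, respects the defining relations of $U(\widehat{\mathfrak{g}})$; everything then extends multiplicatively. Regard the Fock-type module of $\mathcal{A}_{\mathcal{K}(\overline{\mathfrak{u}})}$ as the algebra of polynomial functions on $\mathcal{K}(\overline{\mathfrak{u}})=\widehat{\overline{\mathfrak{u}}}_{\mathrm{nat}}$, with $a^{\ast}_{\alpha,n}=x_{\alpha,-n}$ acting by multiplication and $a_{\alpha,n}=\partial_{x_{\alpha,n}}$ by differentiation, so that $u(z)=\sum_{\alpha\in\dot\Delta(\mathfrak{u})}b^{\ast}_{\alpha}(z)f_{\alpha}$ is the tautological $\overline{\mathfrak{u}}((z))$-valued coordinate and $\exp(u(z))$ is a point of the big cell. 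Here one uses the factorization of the open dense subset $\overline{U}((t))\cdot P((t))\subset G((t))$ and the fact that $\exp\colon \overline{\mathfrak{u}}((t))\to\overline{U}((t))$ is a polynomial isomorphism, since $\overline{\mathfrak{u}}$ is a nilradical.

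\textbf{The classical (first-order) part.} For $b\in\mathfrak{g}$ and a formal $\epsilon$, write $\exp(\epsilon b(z))\exp(u(z))=\exp(u'(z))\,p(z)$ with $u'(z)\in\overline{\mathfrak{u}}((z))$ and $p(z)\in\mathfrak{p}((z))$, to first order in $\epsilon$. Differentiating at $\epsilon=0$ and using the standard formula for the differential of the Baker--Campbell--Hausdorff multiplication together with the splitting $\mathfrak{g}((z))=\overline{\mathfrak{u}}((z))\oplus\mathfrak{p}((z))$, one obtains exactly the two ``geometric'' summands of $\pi(b(z))$: the $\overline{\mathfrak{u}}$-component of $\tfrac{d}{d\epsilon}u'(z)$ contributes the vector-field term $-\sum_{\alpha}b_{\alpha}(z)\bigl[\tfrac{\mathrm{ad}(u(z))e^{\mathrm{ad}(u(z))}}{e^{\mathrm{ad}(u(z))}-\mathrm{id}}(e^{-\mathrm{ad}(u(z))}(b))_{\overline{\mathfrak{u}}}\bigr]_{\alpha}$, while $p(z)$ is $(e^{-\mathrm{ad}(u(z))}b(z))_{\mathfrak{p}}$, which acts through the $\widehat{\mathfrak{p}}_{\mathrm{nat}}$-module structure, i.e.\ on the $U(\widehat{\mathfrak{h}})$ factor after projecting $\mathfrak{p}\to\mathfrak{l}\to\mathfrak{h}$. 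Since $\mathrm{ad}(X)$ is nilpotent on $\mathfrak{g}$ for $X\in\overline{\mathfrak{u}}$, the series in $\mathrm{ad}(u(z))$ are finite sums, and coefficientwise in $z$ everything lands in $\mathcal{A}_{\mathcal{K}(\overline{\mathfrak{u}})}\widehat{\otimes}_{\mathbb{C}}U(\widehat{\mathfrak{h}})$; this is the well-definedness check. With $\sigma(c)$ set to $0$ this assignment is the infinitesimal form of an honest $G((t))$-action on functions on the big cell (compatible with $d$), hence a Lie algebra homomorphism $\mathfrak{g}((t))\rtimes\mathbb{C}d\to\mathcal{A}_{\mathcal{K}(\overline{\mathfrak{u}})}\widehat{\otimes}_{\mathbb{C}}U(\widehat{\mathfrak{h}})$.

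\textbf{The anomaly.} The map of the previous paragraph does not lift to $\widehat{\mathfrak{g}}$: computing $[\pi(a(z)),\pi(b(w))]$ one finds, beyond $\pi([a,b](w))$ and its $\partial_w$-correction, a $c$-number contribution produced by the contractions governed by the basic operator product $a_{\alpha}(z)\,a^{\ast}_{\beta}(w)\sim \delta_{\alpha\beta}(z-w)^{-1}$. One evaluates this contribution --- most transparently via Wick's theorem / normal ordering, exactly as in the vertex-algebra proof of the Feigin--Frenkel homomorphism --- and identifies it with $\sigma(c)$ times the standard affine cocycle $(x\otimes f,\,y\otimes g)\mapsto\mathrm{Res}_{t=0}(f\,dg)\,(x,y)$. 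The correction term $-\bigl(\tfrac{e^{\mathrm{ad}(u(z))}-\mathrm{id}}{\mathrm{ad}(u(z))}\partial_{z}u(z),\,b\bigr)\sigma(c)$, again a finite sum by nilpotency, is precisely designed to cancel this discrepancy, so that the full $\pi$ respects all relations of $U(\widehat{\mathfrak{g}})$ with $\pi(c)=\sigma(c)$.

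\textbf{Main obstacle.} The only genuinely delicate point is the last step: verifying that the normal-ordering correction equals the stated term and that, with it, the brackets of the $\pi(b(z))$ close with the correct central term. This is a careful operator-product computation, in which one must keep track of the cross-contractions between the vector-field summand and the $\mathfrak{p}$-summand; the nilpotency of $\overline{\mathfrak{u}}$ ensures all sums are finite but does not by itself organize the bookkeeping, which is why the vertex-algebra (or direct normal-ordered) calculus is used. The remaining ingredients --- the big-cell factorization, the first-order formula, and the topological well-definedness in $\mathcal{A}_{\mathcal{K}(\overline{\mathfrak{u}})}\widehat{\otimes}_{\mathbb{C}}U(\widehat{\mathfrak{h}})$ --- are routine.
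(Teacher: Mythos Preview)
The paper does not prove this theorem: it is quoted verbatim as \cite[Theorem 3.6]{FKS19} and used as a black box, with no argument given. There is therefore no ``paper's own proof'' to compare your sketch against.

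That said, your outline is a faithful summary of the standard strategy behind such Feigin--Frenkel-type homomorphisms (and, as far as one can tell from the citation, of the approach in \cite{FKS19}): realize the loop algebra $\mathfrak{g}((t))$ by vector fields on the big cell $\overline{U}((t))$ via the BCH/left-translation formula, tensor with the $\widehat{\mathfrak{p}}_{\mathrm{nat}}$-character to get the $(e^{-\mathrm{ad}(u(z))}b(z))_{\mathfrak p}$ term, and then correct for the central extension by the normal-ordering anomaly. Your identification of the delicate step --- matching the Wick contractions to the displayed $\sigma(c)$-term --- is accurate; this is exactly where the vertex-algebra calculus in \cite{FKS19} does the work. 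One small point of precision: the target of $\pi$ in the statement is $\mathcal{A}_{\mathcal{K}(\overline{\mathfrak{u}})}\widehat{\otimes}_{\mathbb C} U(\widehat{\mathfrak{h}})$, so the $\mathfrak{p}$-projection lands in $U(\widehat{\mathfrak{h}})$ rather than merely ``acting through the $\widehat{\mathfrak{p}}_{\mathrm{nat}}$-module structure''; your phrasing conflates the homomorphism with its subsequent use in Theorem~\ref{thm-realization}.
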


Using Theorem \ref{thm-FKS} one gets explicit realization of generalized Imaginary Verma modules. Namely, we have the following result.

\begin{theorem}\cite[Theorem 3.14]{FKS19}\label{thm-realization}
 Let $V$ be a continuous $\widehat{\mathfrak{p}}_{\emph{\textrm{nat}}}$-module with central charge  $a$ $\in$ $\mathbb{C}$ and with trivial action of $\widehat{\mathfrak{u}}_{\emph{\textrm{nat}}}$. Then the topological vector space $\textrm{Pol} \, \Omega_{\mathcal{K}}(\overline{\mathfrak{u}}^{\ast}) \otimes_{\mathbb{C}} V$ is a continuous $\widehat{\mathfrak{g}}$-module with central charge $a$ and with the module structure determined by Theorem \ref{thm-FKS}. 
 
 \end{theorem}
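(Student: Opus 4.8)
The plan is to produce the $\widehat{\mathfrak g}$-module structure by pulling back, along the associative algebra homomorphism $\pi$ of Theorem \ref{thm-FKS}, a module structure over its target algebra $\mathcal{A}_{\mathcal{K}(\overline{\mathfrak u})}\,\widehat\otimes_{\mathbb C}\,U(\widehat{\mathfrak h})$. Since $\pi$ is already known to be an algebra homomorphism, the module axioms for $U(\widehat{\mathfrak g})$ are automatic once the target algebra acts, and $\pi(c)=\sigma(c)=a\cdot\mathrm{id}$ immediately gives central charge $a$; so the whole content of the theorem is to exhibit $\mathrm{Pol}\,\Omega_{\mathcal K}(\overline{\mathfrak u}^{*})\otimes_{\mathbb C}V$ as a \emph{continuous} module over $\mathcal{A}_{\mathcal{K}(\overline{\mathfrak u})}\,\widehat\otimes_{\mathbb C}\,U(\widehat{\mathfrak h})$ and then to check that the pulled-back $\widehat{\mathfrak g}$-action remains continuous.

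First I would recall that the polynomial algebra $\mathbb M:=\mathrm{Pol}\,\Omega_{\mathcal K}(\overline{\mathfrak u}^{*})$ is the standard Fock representation of the infinite rank Weyl algebra $\mathcal{A}_{\mathcal{K}(\overline{\mathfrak u})}$: the generators $a_{\alpha,n}^{*}=x_{\alpha,-n}$ act by multiplication by the polynomial variables and the generators $a_{\alpha,n}=\partial_{x_{\alpha,n}}$ act by the corresponding partial derivatives. Because any element of $\mathbb M$ lies in the subalgebra generated by finitely many of the $x_{\alpha,n}$, all but finitely many $a_{\alpha,n}$ annihilate a given vector; equivalently, the formal distributions $a_{\alpha}(z)$ and $a_{\alpha}^{*}(z)$ act on $\mathbb M$ as fields, i.e.\ applied to any vector they give a Laurent series in $z$. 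On the other side, restricting the $\widehat{\mathfrak p}_{\mathrm{nat}}$-module $V$ to the Cartan-type subalgebra $\widehat{\mathfrak h}\subset\widehat{\mathfrak p}_{\mathrm{nat}}$ makes it a continuous $U(\widehat{\mathfrak h})$-module (the hypotheses that $c$ acts by $a$ and that $\widehat{\mathfrak u}_{\mathrm{nat}}$ acts trivially are exactly what the target algebra of $\pi$ needs). Since $\mathbb M$ is smooth over $\mathcal{A}_{\mathcal{K}(\overline{\mathfrak u})}$ and $V$ is continuous over $U(\widehat{\mathfrak h})$, the completed tensor product $\mathcal{A}_{\mathcal{K}(\overline{\mathfrak u})}\,\widehat\otimes_{\mathbb C}\,U(\widehat{\mathfrak h})$ acts on $\mathbb M\otimes_{\mathbb C}V$ in the obvious componentwise way; composing with $\pi$ then turns $\mathbb M\otimes_{\mathbb C}V$ into a $U(\widehat{\mathfrak g})$-module.

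It then remains to verify that this $\widehat{\mathfrak g}$-module is continuous, i.e.\ that for every $m\in\mathbb M\otimes V$ there is $N$ with $\pi\bigl(\mathfrak g\otimes t^{N}\mathbb C[[t]]\bigr)m=0$, and, as a prerequisite, that each operator $\pi(b_{n})$ for $b\in\mathfrak g$, $n\in\mathbb Z$ — which a priori is an infinite expression assembled from the exponential series in $\mathrm{ad}(u(z))$ and from normally ordered products of the fields $a_{\alpha}(z),a_{\alpha}^{*}(z)$ with the $\widehat{\mathfrak p}_{\mathrm{nat}}$-action on $V$ — is actually a well-defined operator on $\mathbb M\otimes V$. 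For this I would go through the three summands of the formula for $\pi(b(z))$ in Theorem \ref{thm-FKS}: the series in $\mathrm{ad}(u(z))$ truncates because $\overline{\mathfrak u}$ is nilpotent; the field $u(z)=\sum_{\alpha}a_{\alpha}^{*}(z)f_{\alpha}$ only raises polynomial degree; and, fixing $m=P\otimes w$ with $P$ involving finitely many $x_{\alpha,j}$ and $w$ annihilated by a deep enough congruence subalgebra of $\widehat{\mathfrak p}_{\mathrm{nat}}$, the annihilation-type operators $a_{\alpha}(z)$ together with the term $(e^{-\mathrm{ad}(u(z))}b(z))_{\mathfrak p}$ kill $m$ for all sufficiently high modes, which yields the required $N$. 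The one place that needs genuine, if routine, bookkeeping is this matching of pole and degree estimates on each summand against the finite support of $P$ and the smoothness level of $w$; the algebraic heart — that the resulting operators satisfy the defining relations of $\widehat{\mathfrak g}$ — is supplied for free by Theorem \ref{thm-FKS}, so I expect the continuity check to be the only real obstacle.
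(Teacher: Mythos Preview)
The paper does not supply its own proof of this statement: Theorem~\ref{thm-realization} is quoted verbatim from \cite[Theorem~3.14]{FKS19} and used as a black box, so there is no argument in the paper to compare yours against. Your sketch is the right shape for such a proof --- pull the module structure back through the homomorphism $\pi$ of Theorem~\ref{thm-FKS}, realize $\mathrm{Pol}\,\Omega_{\mathcal K}(\overline{\mathfrak u}^{*})$ as the Fock module for the Weyl algebra, let the second tensor factor act on $V$, and then verify continuity mode by mode using the nilpotency of $\overline{\mathfrak u}$ and the smoothness of $V$.

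One point you should clean up: you write that it suffices to restrict $V$ to $\widehat{\mathfrak h}$, but the summand $(e^{-\mathrm{ad}(u(z))}b(z))_{\mathfrak p}$ in the formula for $\pi(b(z))$ lands in $\widehat{\mathfrak p}_{\mathrm{nat}}$, not in $\widehat{\mathfrak h}$, and must act on $V$ via the full $\widehat{\mathfrak l}_{\mathrm{nat}}$-module structure (the $\widehat{\mathfrak u}_{\mathrm{nat}}$-part being trivial by hypothesis). You implicitly use this in your continuity paragraph, so the inconsistency is only in the setup; it traces back to what appears to be a misprint in the paper's statement of Theorem~\ref{thm-FKS}, where the target $U(\widehat{\mathfrak h})$ should read $U(\widehat{\mathfrak l}_{\mathrm{nat}})$ (compare the formula itself and the role of $V$ in Theorem~\ref{thm-realization}). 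With that correction your outline is sound.
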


Note that $\textrm{Pol}\, \Omega_{\mathcal{K}}(\overline{\mathfrak{u}}^{\ast}) = \textrm{Pol} \Big( (\widehat{\overline{\mathfrak{u}}}_{\textrm{nat}})^{\ast} \Big) \simeq S(\widehat{\overline{\mathfrak{u}}}_{\textrm{nat}}) \simeq \mathbb{C}[\partial_{x_{\alpha,n}} \ | \ \alpha \in \dot\Delta(\mathfrak{u}), n \in \mathbb{Z}].$

Set $W_{a, \widehat{\mathfrak{p}}_{\textrm{nat}}}(V)=\textrm{Pol} \, \Omega_{\mathcal{K}}(\overline{\mathfrak{u}}^{\ast}) \otimes_{\mathbb{C}} V$. The $\widehat{\mathfrak{g}}$-module $W_{a, \widehat{\mathfrak{p}}_{\textrm{nat}}}(V)$ will be called \emph{generalized Imaginary Wakimoto module} due to its similarity with 
the generalized Imaginary Verma  modules. In fact we have an isomorphism of  $\widehat{\mathfrak{g}}$-modules:
$$W_{a, \widehat{\mathfrak{p}}_{\textrm{nat}}}(V)\simeq M_{a,\widehat{\mathfrak{p}}_{\textrm{nat}}}(V)$$
for any $\widehat{\mathfrak{l}}_{\textrm{nat}}$-module $V$.

Families of generalized Imaginary Wakimoto modules were studied in \cite{Cox05}, \cite{CF04}, \cite{CF06}   under the name of \emph{Intermediate Wakimoto modules}. 

Applying Theorem \ref{thm-main} we immediately obtain

\begin{corollary}\label{cor-wak}
Let $V$ be an irreducible tensor module over the Levi factor $\widehat{\mathfrak{l}}_{\emph{\textrm{nat}}}$ with nonzero central charge. Then $W_{a, \widehat{\mathfrak{p}}_{\emph{\textrm{nat}}}}(V)$ is  irreducible.

\end{corollary}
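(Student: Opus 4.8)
The plan is to deduce Corollary~\ref{cor-wak} directly from Theorem~\ref{thm-main} by transporting irreducibility across the isomorphism $W_{a,\widehat{\mathfrak p}_{\textrm{nat}}}(V)\simeq M_{a,\widehat{\mathfrak p}_{\textrm{nat}}}(V)$ of $\widehat{\mathfrak g}$-modules. First I would observe that $\widehat{\mathfrak p}_{\textrm{nat}}$ is a parabolic subalgebra of type II, as already noted in the text, since its Levi factor $\widehat{\mathfrak l}_{\textrm{nat}}=\big(\mathfrak l\otimes_{\mathbb C}\mathbb C((t))\big)\oplus\mathbb C c\oplus\mathbb C d$ contains the full Heisenberg subalgebra $G$. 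Next I would record that $\widehat{\mathfrak l}_{\textrm{nat}}$ decomposes in the standard way as $\widehat{\mathfrak l}_{\textrm{nat}}=\widehat{\mathfrak{l}^0}+G(\widehat{\mathfrak l})^{\perp}$; here, because $\widehat{\mathfrak l}_{\textrm{nat}}$ contains all of $G$, the complement $G(\widehat{\mathfrak l})^{\perp}$ reduces to $\mathbb C c$, so a tensor module over $\widehat{\mathfrak l}_{\textrm{nat}}$ is simply a weight $\widehat{\mathfrak{l}^0}$-module equipped with the obvious action of $d$; irreducibility of $V$ as a tensor module is then exactly irreducibility as an $\widehat{\mathfrak l}_{\textrm{nat}}$-module. (If one prefers to allow the more general case in which only part of $G(\widehat{\mathfrak l})^{\perp}$ lies in the Levi factor, the same argument goes through verbatim, invoking Proposition~\ref{prop-irr-tensor}.)

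With these identifications in place, the second step is to apply Theorem~\ref{thm-main} with this parabolic: since $a\neq 0$ and $V=M\otimes S$ is an irreducible weight tensor $\widehat{\mathfrak l}_{\textrm{nat}}$-module viewed as a $\widehat{\mathfrak p}_{\textrm{nat}}$-module with $\widehat{\mathfrak u}_{\textrm{nat}}$ acting trivially, the generalized Imaginary Verma module $M_{a,\widehat{\mathfrak p}_{\textrm{nat}}}(V)=\mathbb I_{a,\widehat{\mathfrak p}_{\textrm{nat}}}(V)$ is an irreducible $\widehat{\mathfrak g}$-module. The third step is to invoke the isomorphism of $\widehat{\mathfrak g}$-modules $W_{a,\widehat{\mathfrak p}_{\textrm{nat}}}(V)\simeq M_{a,\widehat{\mathfrak p}_{\textrm{nat}}}(V)$ established above via Theorem~\ref{thm-realization} (the free-field realization coming from the Feigin--Frenkel-type homomorphism of Theorem~\ref{thm-FKS}), and conclude that $W_{a,\widehat{\mathfrak p}_{\textrm{nat}}}(V)$ is irreducible as well.

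The one point that deserves care — and the closest thing to an obstacle — is the interplay between the topological/continuous setting used for the Wakimoto realization and the purely algebraic setting of Theorem~\ref{thm-main}. Theorem~\ref{thm-realization} produces a \emph{continuous} $\widehat{\mathfrak g}$-module structure on $\mathrm{Pol}\,\Omega_{\mathcal K}(\overline{\mathfrak u}^{\ast})\otimes_{\mathbb C}V$, and the asserted isomorphism with $M_{a,\widehat{\mathfrak p}_{\textrm{nat}}}(V)$ is an isomorphism of $\widehat{\mathfrak g}$-modules in this refined category; I would make explicit that this isomorphism restricts to an isomorphism of underlying abstract $\widehat{\mathfrak g}$-modules (it is the identity on the common underlying vector space $S(\widehat{\overline{\mathfrak u}}_{\textrm{nat}})\otimes_{\mathbb C}V$ compatibly with the actions), so that an $\widehat{\mathfrak g}$-submodule of one corresponds to an $\widehat{\mathfrak g}$-submodule of the other. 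Once this compatibility is spelled out, irreducibility of $M_{a,\widehat{\mathfrak p}_{\textrm{nat}}}(V)$ from Theorem~\ref{thm-main} transfers immediately to $W_{a,\widehat{\mathfrak p}_{\textrm{nat}}}(V)$, completing the proof.
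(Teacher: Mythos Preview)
Your argument is correct and matches the paper's own (implicit) proof: apply Theorem~\ref{thm-main} to obtain irreducibility of $M_{a,\widehat{\mathfrak p}_{\textrm{nat}}}(V)$, then transport it across the isomorphism $W_{a,\widehat{\mathfrak p}_{\textrm{nat}}}(V)\simeq M_{a,\widehat{\mathfrak p}_{\textrm{nat}}}(V)$ stated just before the corollary. One small correction to your aside: the claim that $G(\widehat{\mathfrak l})^{\perp}$ reduces to $\mathbb C c$ is not generally true, since $G(\widehat{\mathfrak l})$ is defined as the imaginary part of $\widehat{\mathfrak l}^0$ (the subalgebra generated by $H$ and the \emph{real} root subspaces), which for a proper Levi $\mathfrak l\subsetneq\mathfrak g$ gives only the sub-Heisenberg built from the span of the coroots of $\mathfrak l$, not all of $G$; however, as you yourself note parenthetically, this is irrelevant to the proof, since Theorem~\ref{thm-main} applies directly to any irreducible tensor module regardless of how large $G(\widehat{\mathfrak l})^{\perp}$ is.
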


\medskip

\subsection{Imaginary Wakimoto functor}

We define the  following \emph{Imaginary Wakimoto functor}
  
   \begin{align*}
  \mathbb{IW}_{a, \widehat{\mathfrak{p}}_{\textrm{nat}}} \colon \mathcal{M}(a, \widehat{\mathfrak{l}}_{\textrm{nat}}) \rightarrow \mathcal{M}(a, \widehat{\mathfrak{g}}),
   \end{align*}
   which sends an $\widehat{\mathfrak{l}}_{\textrm{nat}}$-module $V$ to the $\widehat{\mathfrak{g}}$-module   $W_{a,\widehat{\mathfrak{p}}_{\textrm{nat}}}(V)$.  

\medskip

$$\xymatrix{ & \mathcal{M}(a, \widehat{\mathfrak{l}}_{\textrm{nat}})  \ar@{->}^{\mathbb{I}_{a, \widehat{\mathfrak{p}}_{\textrm{nat}}} \ \ \textrm{Imaginary induction functor}}[d] \ar@/_0.4cm/[d]_{\textrm{Imaginary Wakimoto functor} \ \ \mathbb{IW}_{a, \widehat{\mathfrak{p}}_{\textrm{nat}}} }&  \\ 
& \mathcal{M}(a, \widehat{\mathfrak{g}}) }$$  \\

\medskip

Consider the following particular cases.

\begin{example}
\begin{enumerate}
\item Let $\widehat{\mathfrak{l}}_{\textrm{nat}}=G+H$ and $V\simeq L(a)$ a highest weight $G+H$-module with highest weight $\lambda$, $\lambda(c)=a$.
Then the Imaginary Verma  module $M_{\widehat{\mathfrak{b}}_{\textrm{nat}}}(\lambda)= \mathbb{I}_{a, \widehat{\mathfrak{b}}_{\textrm{nat}}}(\mathbb C)$ is isomorphic to $W_{a,\widehat{\mathfrak{p}}_{\textrm{nat}}}(L(a))=\mathbb{IW}_{a,\widehat{\mathfrak{p}}_{\textrm{nat}}}(L(a))$.
\item Similarly, for any Levi subalgebra $\widehat{\mathfrak{l}}_{\textrm{nat}}$ and a Verma $\widehat{\mathfrak{l}}_{\textrm{nat}}$-module $M_{\widehat{\mathfrak{l}}_{\textrm{nat}}}(\lambda)$ we have the isomorphisms
$$M_{\widehat{\mathfrak{p}}_{\textrm{nat}}}(M_{\widehat{\mathfrak{l}}_{\textrm{nat}}}(\lambda))= \mathbb{I}_{a, \widehat{\mathfrak{p}}_{\textrm{nat}}}(M_{\widehat{\mathfrak{l}}_{\textrm{nat}}}(\lambda))\simeq \mathbb{IW}_{a,\widehat{\mathfrak{p}}_{\textrm{nat}}}(M_{\widehat{\mathfrak{l}}_{\textrm{nat}}}(\lambda)),$$
where $a=\lambda(c)$.

 \item If in the previous item we take the Wakimoto module $\widehat{\mathfrak{l}}_{\textrm{nat}}$-module $W_{\widehat{\mathfrak{l}}_{\textrm{nat}}}(\lambda)$ with highest weight $\lambda$ then
$ \mathbb{IW}_{a,\widehat{\mathfrak{p}}_{\textrm{nat}}}(W_{\widehat{\mathfrak{l}}_{\textrm{nat}}}(\lambda))$
 is an
 Intermediate Wakimoto module  for the parabolic subalgebra $\widehat{\mathfrak{p}}_{\textrm{nat}}$. For a generic $\lambda$  (that is, when $M_{\widehat{\mathfrak{l}}_{\textrm{nat}}}(\lambda)$ is irreducible) we have an isomorphism $M_{\widehat{\mathfrak{l}}_{\textrm{nat}}}(\lambda)\simeq W_{\widehat{\mathfrak{l}}_{\textrm{nat}}}(\lambda)$ and hence
$$ \mathbb{I}_{a, \widehat{\mathfrak{p}}_{\textrm{nat}}}(M_{\widehat{\mathfrak{l}}_{\textrm{nat}}}(\lambda)) \simeq \mathbb{IW}_{a,\widehat{\mathfrak{p}}_{\textrm{nat}}}(W_{\widehat{\mathfrak{l}}_{\textrm{nat}}}(\lambda)).$$
Moreover, if  $\lambda(c)  \neq 0$ then $\mathbb{IW}_{a,\widehat{\mathfrak{p}}_{\textrm{nat}}}(W_{\widehat{\mathfrak{l}}_{\textrm{nat}}}(\lambda))$ is irreducible  by Theorem \ref{thm-main}.

$$\xymatrix{ &  \mathbb{I}_{a, \widehat{\mathfrak{p}}_{\textrm{nat}}}\Big(M_{\widehat{\mathfrak{l}}_{\textrm{nat}}}(\lambda)\Big) & \textrm{Generalized Imaginary Verma module}\\
 M_{\widehat{\mathfrak{l}}_{\textrm{nat}}}(\lambda) \ar@{->}^{\mathbb{I}_{a, \widehat{\mathfrak{p}}_{\textrm{nat}}}}[ru] \ar@{->}_{\mathbb{IW}_{a, \widehat{\mathfrak{p}}_{\textrm{nat}}}}[rd] &  \ar@{--}[u] \ar@{--}[d] & \textrm{$\simeq$ generically (for a generic $\lambda$)} \\
 & \mathbb{IW}_{a, \widehat{\mathfrak{p}}_{\textrm{nat}}}\Big(W_{\widehat{\mathfrak{l}}_{\textrm{nat}}}(\lambda)\Big) & \textrm{Generalized Imaginary Wakimoto module}}$$

\end{enumerate}
\end{example}

\medskip

 We also get from Theorem \ref{thm-main}
  
  \begin{corollary}\label{cor-ind}
 Let $\widehat{\mathfrak{l}}_{\rm nat} = \widehat{\mathfrak{l}}^0_{\rm nat} + G(\widehat{\mathfrak{l}}_{\rm nat})^{\perp}$, where $[x, G(\widehat{\mathfrak{l}}_{\rm nat})^{\perp}] = 0$, for any root element  $x\in  \widehat{\mathfrak{l}}^{0}_{\rm nat}$. If $N$ is an irreducible $\widehat{\mathfrak{l}}^0_{\rm nat}$-module and $S$ is a strongly irreducible $ G(\widehat{\mathfrak l}_{\rm nat})^{\perp}\oplus \mathbb C d$-module, both with the same nonzero central charge $a$, then
  $$\mathbb{I}_{a, \widehat{\mathfrak{p}}_{\emph{\textrm{nat}}}} \Big( N \otimes S\Big) \simeq \mathbb{IW}_{a, \widehat{\mathfrak{p}}_{\emph{\textrm{nat}}}} \Big( N \otimes S\Big) $$ is irreducible. 
  
  \end{corollary}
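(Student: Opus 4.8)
The plan is to obtain the statement by concatenating three facts already available in the excerpt: Proposition \ref{prop-irr-tensor} on irreducibility of tensor modules, Theorem \ref{thm-main} on irreducibility of the induced module, and the identification $W_{a,\widehat{\mathfrak p}_{\textrm{nat}}}(V)\simeq M_{a,\widehat{\mathfrak p}_{\textrm{nat}}}(V)$ recorded after Theorem \ref{thm-realization}. First I would check that $N\otimes S$ is a tensor $\widehat{\mathfrak l}_{\textrm{nat}}$-module, that is, an object of $\mathcal T_a(\widehat{\mathfrak l}_{\textrm{nat}})$: under the hypothesis $\widehat{\mathfrak l}_{\textrm{nat}}=\widehat{\mathfrak l}^0_{\textrm{nat}}+G(\widehat{\mathfrak l}_{\textrm{nat}})^{\perp}$ with $[x,G(\widehat{\mathfrak l}_{\textrm{nat}})^{\perp}]=0$ for every root element $x\in\widehat{\mathfrak l}^0_{\textrm{nat}}$, one lets $\widehat{\mathfrak l}^0_{\textrm{nat}}$ act on the first factor and $G(\widehat{\mathfrak l}_{\textrm{nat}})^{\perp}$ on the second; these actions are compatible precisely because $G(\widehat{\mathfrak l}_{\textrm{nat}})^{\perp}$ centralizes the root part of $\widehat{\mathfrak l}^0_{\textrm{nat}}$, the common value $a$ of the central charge makes $c$ act consistently, and the diagonalizable action of $d$ on $S$ endows $S$ with the structure of a $\mathbb Z$-graded $G(\widehat{\mathfrak l}_{\textrm{nat}})^{\perp}$-module with grading compatible with that of $G(\widehat{\mathfrak l}_{\textrm{nat}})^{\perp}$.

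Next, because $N$ is an irreducible weight $\widehat{\mathfrak l}^0_{\textrm{nat}}$-module with central charge $a$ and $S$ is strongly irreducible with the same central charge, Proposition \ref{prop-irr-tensor} shows that $V=N\otimes S$ is an irreducible weight $\widehat{\mathfrak l}_{\textrm{nat}}$-module with diagonalizable tensor-product action of $d$. Since $\widehat{\mathfrak p}_{\textrm{nat}}$ is a parabolic subalgebra of type II and $a\neq 0$, Theorem \ref{thm-main} applies to this $V$ and yields that $\mathbb{I}_{a,\widehat{\mathfrak p}_{\textrm{nat}}}(N\otimes S)=M_{a,\widehat{\mathfrak p}_{\textrm{nat}}}(N\otimes S)$ is an irreducible $\widehat{\mathfrak g}$-module; alternatively one may quote Corollary \ref{cor-wak} directly.

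Finally, the isomorphism $W_{a,\widehat{\mathfrak p}_{\textrm{nat}}}(V)\simeq M_{a,\widehat{\mathfrak p}_{\textrm{nat}}}(V)$ of $\widehat{\mathfrak g}$-modules, which holds for an arbitrary $\widehat{\mathfrak l}_{\textrm{nat}}$-module $V$ by Theorem \ref{thm-realization} and the remark following it, specializes to $V=N\otimes S$ to give $\mathbb{IW}_{a,\widehat{\mathfrak p}_{\textrm{nat}}}(N\otimes S)\simeq\mathbb{I}_{a,\widehat{\mathfrak p}_{\textrm{nat}}}(N\otimes S)$; transporting irreducibility along this isomorphism finishes the argument. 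I do not expect a real obstacle: the proof is a routine assembly of earlier results, and the only point demanding attention is bookkeeping, namely matching the hypotheses on $N$ and $S$ (and their shared central charge $a\neq 0$) to the precise hypotheses of Proposition \ref{prop-irr-tensor} and Theorem \ref{thm-main}, and verifying that ``strongly irreducible'' supplies exactly the $\mathbb Z$-graded irreducible $G(\widehat{\mathfrak l}_{\textrm{nat}})^{\perp}$-module needed there.
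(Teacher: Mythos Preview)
Your proposal is correct and follows essentially the same route as the paper: the paper states the corollary as an immediate consequence of Theorem~\ref{thm-main} (implicitly via Proposition~\ref{prop-irr-tensor} to get irreducibility of $N\otimes S$, and the identification $W_{a,\widehat{\mathfrak p}_{\textrm{nat}}}(V)\simeq M_{a,\widehat{\mathfrak p}_{\textrm{nat}}}(V)$ for the Wakimoto side), which is exactly the chain of facts you assemble.
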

  
  \medskip
  
For any root  $\alpha \in \dot{\Delta}_{+}(\mathfrak{l}) \subset \Delta^{\rm re}_{+}(\widehat{\mathfrak{l}}_{\rm nat}) \subset \Delta^{\rm re}_{+}$ \ consider the twisting functor $ T_\alpha$. Then  Theorem \ref{thm:twisting functor intertwining}   and \cite[Theorem 4.16]{Futorny-Krizka2021} immediately imply

\begin{corollary}\label{cor-twisting-wak} 
$
  T_\alpha \circ \mathbb{IW}_{a, \widehat{\mathfrak{p}}_{\emph{\textrm{nat}}}} = \mathbb{IW}_{a, \widehat{\mathfrak{p}}_{\emph{\textrm{nat}}}} \circ\, T_\alpha^{\widehat{\mathfrak{l}}_{\emph{\textrm{nat}}}}.
$
In particular, we have
\begin{align*}
  T_\alpha\Big(\mathbb{IW}_{a, \widehat{\mathfrak{p}}_{\emph{\textrm{nat}}}}(M_{\widehat{\mathfrak{l}}_{\emph{\textrm{nat}}}}(\lambda))\Big) \simeq \mathbb{IW}_{a, \widehat{\mathfrak{p}}_{\emph{\textrm{nat}}}} \Big(T_{\alpha}^{\widehat{\mathfrak{l}}_{\emph{\textrm{nat}}}}({M}_{\widehat{\mathfrak{p}}}(M_{\mathfrak{l}}(\lambda))) \Big)\simeq   \mathbb{IW}_{a, \widehat{\mathfrak{p}}_{\emph{\textrm{nat}}}} \Big({M}_{\widehat{\mathfrak{p}}}(T_{\alpha}^\mathfrak{l}(M_{\mathfrak{l}}(\lambda)))\Big)
\end{align*}

and

\begin{eqnarray*}
  T_\alpha\Big(\mathbb{IW}_{a, \widehat{\mathfrak{p}}_{\emph{\textrm{nat}}}}(N \otimes S)\Big) & \simeq & \mathbb{IW}_{a,\widehat{\mathfrak{p}}_{\emph{\textrm{nat}}}} \Big(T_{\alpha}^{\widehat{\mathfrak{l}}_{\emph{\textrm{nat}}}}(N \otimes S) \Big) \simeq \\
  & \simeq &\mathbb{IW}_{a,\widehat{\mathfrak{p}}_{\emph{\textrm{nat}}}}\Big(T_{\alpha}^{\widehat{\mathfrak{l}}_{\emph{\textrm{nat}}}}(N) \otimes S\Big),
\end{eqnarray*}
where  $M_{\widehat{\mathfrak{l}}_{\emph{\textrm{nat}}}}(\lambda)$ is the Verma $\widehat{\mathfrak{l}}_{\emph{\textrm{nat}}}$-module with highest weight $\lambda$, $M_{\mathfrak{l}}(\lambda)$ is the Verma $\mathfrak{l}$-module with highest weight $\lambda$, $\widehat{\mathfrak{p}}$ is the standard parabolic subalgebra of $\widehat{\mathfrak{l}}_{\emph{\textrm{nat}}}$ with finite-dimensional  Levi factor ${\mathfrak{l}}+H$, ${M}_{\widehat{\mathfrak{p}}}$ is the induction functor $Ind_{\widehat{\mathfrak{p}}}^{\widehat{\mathfrak{l}}_{\emph{\textrm{nat}}}}$ and the modules $N$ and $S$ are as in Corollary \ref{cor-ind}.
\end{corollary}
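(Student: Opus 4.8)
The plan is to derive Corollary~\ref{cor-twisting-wak} as a formal consequence of the isomorphism $W_{a,\widehat{\mathfrak{p}}_{\textrm{nat}}}(V)\simeq M_{a,\widehat{\mathfrak{p}}_{\textrm{nat}}}(V)$ established after Theorem~\ref{thm-realization}, together with the intertwining property of the twisting functor from Theorem~\ref{thm:twisting functor intertwining}. First I would observe that the natural isomorphism $W_{a,\widehat{\mathfrak{p}}_{\textrm{nat}}}(V)\simeq M_{a,\widehat{\mathfrak{p}}_{\textrm{nat}}}(V)=\mathbb{I}_{a,\widehat{\mathfrak{p}}_{\textrm{nat}}}(V)$ is functorial in $V$, so that $\mathbb{IW}_{a,\widehat{\mathfrak{p}}_{\textrm{nat}}}$ and $\mathbb{I}_{a,\widehat{\mathfrak{p}}_{\textrm{nat}}}$ are naturally isomorphic functors $\mathcal{M}(a,\widehat{\mathfrak{l}}_{\textrm{nat}})\to\mathcal{M}(a,\widehat{\mathfrak{g}})$. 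Since $\dot\Delta_+(\mathfrak{l})\subset\Delta^{\rm re}_+(\widehat{\mathfrak{l}}_{\textrm{nat}})$, the root $\alpha$ lies in $\Delta^{\rm re}(\widehat{\mathfrak{l}}_{\textrm{nat}})$, so Theorem~\ref{thm:twisting functor intertwining} applies and gives a natural isomorphism $T_\alpha\circ\mathbb{I}_{a,\widehat{\mathfrak{p}}_{\textrm{nat}}}\simeq\mathbb{I}_{a,\widehat{\mathfrak{p}}_{\textrm{nat}}}\circ T_\alpha^{\widehat{\mathfrak{l}}_{\textrm{nat}}}$. Composing with the Verma-Wakimoto identification on both sides yields $T_\alpha\circ\mathbb{IW}_{a,\widehat{\mathfrak{p}}_{\textrm{nat}}}\simeq\mathbb{IW}_{a,\widehat{\mathfrak{p}}_{\textrm{nat}}}\circ T_\alpha^{\widehat{\mathfrak{l}}_{\textrm{nat}}}$, which is the first assertion.

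For the two displayed special cases, I would simply substitute the appropriate $\widehat{\mathfrak{l}}_{\textrm{nat}}$-modules into the general identity. For the Verma-module case, take $V=M_{\widehat{\mathfrak{l}}_{\textrm{nat}}}(\lambda)$ and use the transitivity of induction, $M_{\widehat{\mathfrak{l}}_{\textrm{nat}}}(\lambda)\simeq M_{\widehat{\mathfrak{p}}}(M_{\mathfrak{l}}(\lambda))$ where $\widehat{\mathfrak{p}}$ is the standard parabolic of $\widehat{\mathfrak{l}}_{\textrm{nat}}$ with finite-dimensional Levi factor $\mathfrak{l}+H$, so that $T_\alpha^{\widehat{\mathfrak{l}}_{\textrm{nat}}}(M_{\widehat{\mathfrak{l}}_{\textrm{nat}}}(\lambda))\simeq M_{\widehat{\mathfrak{p}}}(T_\alpha^{\mathfrak{l}}(M_{\mathfrak{l}}(\lambda)))$; here one invokes the analogue of Theorem~\ref{thm:twisting functor intertwining} one level down, i.e.\ \cite[Theorem 4.16]{Futorny-Krizka2021} applied to the pair $(\widehat{\mathfrak{l}}_{\textrm{nat}},\widehat{\mathfrak{p}})$, which is legitimate since $\alpha\in\dot\Delta_+(\mathfrak{l})$ is a real root of the Levi factor. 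For the tensor-module case, take $V=N\otimes S$; since $\alpha$ is a root of $\widehat{\mathfrak{l}}^0_{\textrm{nat}}$ and $[\,x,G(\widehat{\mathfrak{l}}_{\textrm{nat}})^\perp\,]=0$ for all root elements $x\in\widehat{\mathfrak{l}}^0_{\textrm{nat}}$, the twisting functor $T_\alpha^{\widehat{\mathfrak{l}}_{\textrm{nat}}}$ acts only on the first tensor factor, giving $T_\alpha^{\widehat{\mathfrak{l}}_{\textrm{nat}}}(N\otimes S)\simeq T_\alpha^{\widehat{\mathfrak{l}}_{\textrm{nat}}}(N)\otimes S\simeq T_\alpha^{\widehat{\mathfrak{l}}^0_{\textrm{nat}}}(N)\otimes S$; substituting into the general identity produces the stated chain of isomorphisms.

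The point that needs the most care is the compatibility of the twisting functor with the tensor decomposition $N\otimes S$ and with the transitivity of induction: one must check that $f_\alpha\in\widehat{\mathfrak{g}}_\alpha$ actually lies in $\widehat{\mathfrak{l}}^0_{\textrm{nat}}$ and commutes with $G(\widehat{\mathfrak{l}}_{\textrm{nat}})^\perp$, so that the Ore localization $U(\widehat{\mathfrak{l}}_{\textrm{nat}})_{(f_\alpha)}$ factors through $U(\widehat{\mathfrak{l}}^0_{\textrm{nat}})_{(f_\alpha)}\widehat\otimes_{\mathbb{C}}U(G(\widehat{\mathfrak{l}}_{\textrm{nat}})^\perp\oplus\mathbb{C}d)$ and hence $T_\alpha^{\widehat{\mathfrak{l}}_{\textrm{nat}}}(N\otimes S)$ is computed factorwise. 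This is essentially a PBW/localization bookkeeping argument entirely parallel to the proof of Theorem~\ref{thm:twisting functor intertwining}, so it is routine once set up correctly; the rest of the corollary is a purely formal diagram chase using the two functorial isomorphisms already in hand.
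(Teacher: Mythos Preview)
Your proposal is correct and follows exactly the route the paper indicates: the paper gives no explicit proof, merely stating (just before the corollary) that it follows immediately from Theorem~\ref{thm:twisting functor intertwining} and \cite[Theorem~4.16]{Futorny-Krizka2021}. Your write-up unpacks precisely those two ingredients --- the identification $\mathbb{IW}_{a,\widehat{\mathfrak{p}}_{\textrm{nat}}}\simeq\mathbb{I}_{a,\widehat{\mathfrak{p}}_{\textrm{nat}}}$ plus the intertwining property at both the $\widehat{\mathfrak{g}}$ and $\widehat{\mathfrak{l}}_{\textrm{nat}}$ levels --- and your treatment of the tensor case via the commutation $[f_\alpha,G(\widehat{\mathfrak{l}}_{\textrm{nat}})^\perp]=0$ is the natural elaboration of what the paper leaves implicit.
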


Note that generically the modules in Corollary \ref{cor-twisting-wak} are irreducible if $a\neq 0$. Hence
Corollaries \ref{cor-ind} and \ref{cor-twisting-wak} provide technique to construct new irreducible modules together with explicit realizations by differential operators (see Theorem \ref{thm-realization}).


 $$ \xymatrix{M_{\widehat{\mathfrak{l}}_{\textrm{nat}}}(\lambda) \ar@{->}_{T_{\alpha}^{\widehat{\mathfrak{l}}_{\textrm{nat}}}}[d] & \simeq & \relax\underbrace{M_{\widehat{\mathfrak{p}} }\Big(M_{\mathfrak{l}}(\lambda)\Big)} \ar@{->}^{T_{\alpha}^{\widehat{\mathfrak{l}}_{\textrm{nat}}}}[d] \\
 \relax\underbrace{T_{\alpha}^{\widehat{\mathfrak{l}}_{\textrm{nat}}}\Big(M_{\widehat{\mathfrak{l}}_{\textrm{nat}}}(\lambda)\Big)} \ar@{->}_{\mathbb{IW}_{a, \widehat{\mathfrak{p}}_{\textrm{nat}}} }[d] & \simeq & M_{\widehat{\mathfrak{p}} }\Big(T_{\alpha}^{\mathfrak{l}} (M_{\mathfrak{l}}(\lambda))\Big) \\
\mathbb{IW}_{a, \widehat{\mathfrak{p}}_{\textrm{nat}}}\Big(T_{\alpha}^{\widehat{\mathfrak{l}}_{\textrm{nat}} }(M_{\widehat{\mathfrak{l}}_{\textrm{nat}}}(\lambda))\Big) & \simeq & \mathbb{IW}_{a, \widehat{\mathfrak{p}}_{\textrm{nat}}} 
 \Big(M_{\widehat{\mathfrak{p}} }\Big(T_{\alpha}^{\mathfrak{l}} (M_{\mathfrak{l}}(\lambda))\Big)\Big)\\} $$


\section*{Acknowledgments}

We express our gratitude to Vyacheslav Futorny, who proposed the problem and with whom we had numerous helpful discussions. 
F. J. S. S. is supported by the FAPEAM grant (006/2018).


\providecommand{\bysame}{\leavevmode\hbox to3em{\hrulefill}\thinspace}
\providecommand{\MR}{\relax\ifhmode\unskip\space\fi MR }
\providecommand{\MRhref}[2]{%
  \href{http://www.ams.org/mathscinet-getitem?mr=#1}{#2}
}
\providecommand{\href}[2]{#2}

\end{document}